\numberwithin{equation}{section}
\definecolor{e-mail}{rgb}{0,.40,.80}
\definecolor{reference}{rgb}{0,.40,.80}
\definecolor{citation}{rgb}{0,.40,.80}
\crefname{subsection}{Subsection}{subsections}
\newtheorem{thm}{Theorem}[section]
\crefname{thm}{Theorem}{theorems}
\newtheorem*{theorem}{Theorem}
\newtheorem{prop}[thm]{Proposition}
\crefname{prop}{Proposition}{propositions}
\newtheorem{cor}[thm]{Corollary}
\newtheorem{lm}[thm]{Lemma}
\theoremstyle{definition}
\newtheorem{defn}[thm]{Definition}
\theoremstyle{remark}
\newtheorem{remark}[thm]{Remark}
\newtheorem{example}[thm]{Example}
\newcommand{\cC}{\mathcal{C}}
\newcommand{\cM}{\mathcal{M}}
\newcommand{\Tr}{\mathrm{Tr}}
\newcommand{\C}{\mathbf{C}}
\newcommand{\U}{\mathrm{U}_{\hbar}}
\newcommand{\Z}{\mathbf{Z}}
\newcommand{\GL}{\mathrm{GL}}
\renewcommand{\b}{\mathfrak{b}}
\newcommand{\g}{\mathfrak{g}}
\newcommand{\m}{\mathfrak{m}}
\newcommand{\p}{\mathfrak{p}}
\newcommand{\w}{\mathfrak{w}}
\newcommand{\Wh}{\mathrm{Wh}_{\hbar}}
\newcommand{\act}{\mathrm{act}}
\DeclareMathOperator{\ad}{ad}
\newcommand{\BiMod}[2]{{}_{#1}\mathrm{BiMod}_{#2}}
\newcommand{\forget}{\mathrm{forget}}
\newcommand{\free}{\mathrm{free}}
\newcommand{\HC}{\mathrm{HC}_{\hbar}}
\newcommand{\id}{\mathrm{id}}
\newcommand{\RMod}{\mathrm{RMod}}
\DeclareMathOperator{\Rep}{Rep}
\newcommand{\res}{\mathrm{res}}
\newcommand{\gl}{\mathfrak{gl}}
\newcommand{\Span}{\mathrm{span}}
\newcommand{\col}{\mathrm{col}}
\newcommand{\row}{\mathrm{row}}
\newcommand{\triv}{\mathrm{triv}}
\newcommand{\cW}{\mathcal{W}}
\renewcommand{\l}{\mathfrak{l}}
\newcommand{\defterm}[1]{\textbf{\emph{#1}}}
\newcommand{\adj}[2]{
	\xymatrix{
		#1 \ar@<.5ex>[r] & #2 \ar@<.5ex>[l]
	}
}
\title{Intertwining operators between subregular Whittaker modules for $\gl_N$ and non-standard quantizations}
\address{Department of Mathematics, Massachusetts Institute of Technology, \newline
	77 Massachusetts Avenue, Cambridge, MA 02139, United States of America}
\email{artkalm@mit.edu}
\author{Artem Kalmykov} 
\address{Mission San Jose High School, \newline
	41717 Palm Avenue, Fremont, CA 94539, United States of America}
\email{twinbrian236@gmail.com}
\author{Brian Li}
\date{\today}
\begin{document}
	\begin{abstract}
		In this paper, we study intertwining operators between subregular Whittaker modules of $\gl_N$ generalizing, on the one hand, the classical exchange construction of dynamical quantum groups, on the other hand, earlier results for principal W-algebras. We explicitly construct them using the generators of W-algebras introduced by Brundan-Kleshchev. We interpret the fusion on intertwining operators in terms of categorical actions and compute the semi-classical limit of the corresponding monoidal isomorphisms which turn out to depend on dynamical-like parameters. 
	\end{abstract}
	
	\maketitle
	
	\section{Introduction}
	
	\renewcommand{\theequation}{\arabic{equation}}
	
	\renewcommand{\U}{\mathrm{U}}
	\renewcommand{\HC}{\mathrm{HC}}
	
	Quantum groups are algebraic objects that arose from the study of the quantum inverse scattering method for solving quantum integrable systems. By now they have become a classical subject with connections to many areas of mathematics: representations in positive characteristic, 3-manifolds and link invariants, 1-dimensional quantum integrable systems.  
	
	In general, it is hard to construct non-trivial quantum groups. In this paper, we adopt a categorical approach via the \emph{Tannakian reconstruction }\cite{Saavedra,UlbrichFibre,UlbrichHopf}: essentially, a quantum group is defined by the collection of its representations and by how one can ``multiply'' them; mathematically, it is formalized by the notion of a \emph{tensor category} with a tensor structure on the forgetful functor to vector spaces which is equivalent to a collection of isomorphisms
	\begin{equation}
		\label{intro::tensor_structure}
		J_{UV} \colon U \otimes V \rightarrow U \otimes V,
	\end{equation}
	satisfying the \emph{twist equation} (see \cite{EGNO}), for all representations $U,V$. The ``quantum'' part of quantum groups usually means that these isomorphisms come in families depending on a quantization parameter, and its first order defines a Poisson-Lie structure, see \cite{EtingofSchiffmannQuantumGroups}. 
	
	One example of such an approach is the \emph{exchange construction} of Etingof-Varchenko \cite{EtingofVarchenkoExchange} that gives rise to \emph{dynamical quantum groups} which is a certain variant of a quantum group depending on additional parameters. Roughly speaking, the authors of \emph{loc. cit.} consider a finite-dimensional analog of \emph{vertex operators} parameterized by a representation of a reductive Lie algebra: on the one hand, such operators have a natural multiplicative structure given by composition, on the other hand, following the state-field correspondence philosophy, they are in bijection with the elements of the representation itself. In particular, the former induces a non-trivial tensor structure on the collection of all the representations. 
	
	Finite-dimensional vertex operators are essentially maps between \emph{Verma modules}; the latter are defined by a highest-weight vector. It was observed in \cite{KalmykovYangians} that in the case of the general linear group $\GL_N$, there is a version of the state-field correspondence for the \emph{Whittaker} modules generated by Whittaker vectors. In particular, we can apply a similar exchange construction; it turns out that it produces a certain \emph{non-standard} quantum group. In fact, the corresponding solution to the \emph{quantum Yang-Baxter equation}, which is closely related to quantum groups, was obtained by Cremmer-Gervais \cite{CremmerGervais} via studying the exchange algebra of the Toda field theory that can be formulated using \emph{affine W-algebras}; at the same time, the natural setup for Whittaker modules is a \emph{(finite) principal W-algebra}.
	
	In general, a W-algebra is associated to a pair of a reductive Lie algebra and a nilpotent element in it, see \cite{Losev} for introduction to the subject. So, one may ask: is there a similar result for other W-algebras? In this paper, we study the exchange construction for the so-called \emph{subregular} nilpotent element $e$ and the corresponding \emph{subregular} W-algebra $\cW$; we recall the definition in \cref{sect::subregular}. We establish a state-field correspondence in this case in \cref{subsect::whittaker_vectors_standard_representation} using remarkable generators of W-algebras for $\gl_N$ introduced by Brundan-Kleshchev \cite{BrundanKleshchev}; we recall the construction of \emph{loc. cit.} in \cref{section:pyramid}. 
	
	Unfortunately, in the subregular case, the exchange construction does \emph{not} give a quantum group. However, there is still a non-trivial tensor structure analogous to \eqref{intro::tensor_structure}. It turns out that the appropriate categorical setup for it is that of \emph{module categories}, see \cref{sect::background}. More precisely, as in \cite{KalmykovSafronov} and \cite{KalmykovYangians}, we formulate the exchange construction in terms of the category of \emph{Harish-Chandra bimodules} and the \emph{finite Drinfeld-Sokolov reduction} functor, see \cref{sect::ds_reduction}. The finite Drinfeld-Sokolov reduction functor is a direct generalization of the definition of a W-algebra via quantum Hamiltonian reduction to the category of Harish-Chandra bimodules and refers to the quantum Drinfeld-Sokolov reduction, see \cite{FrenkelBenZvi}. Then, the subregular analog of the tensor isomorphisms \eqref{intro::tensor_structure} comes from the induced categorical action of the category of $\GL_N$-representations $\Rep(\GL_N)$ on the category of right $\cW$-modules, see \cref{thm::ds_reduction_tensor_structure_trivialization}. Unlike in the regular case of \cite{KalmykovYangians}, they depend on a dynamical-like parameters lying on a \emph{non-abelian} Lie subalgebra of $\gl_N$, and take the form
	\begin{equation}
		\label{intro::tensor_iso2}
		J_{UV} \colon U \otimes V \otimes \cW \rightarrow U \otimes V \otimes \cW.
	\end{equation}
	for $U,V$ representations of $\GL_N$.	
	
	The main results of the paper are contained in \cref{sect::tensor}. For instance, we provide an algorithm to compute the monoidal isomorphisms \eqref{intro::tensor_structure}. Finally, we compute its semi-classical limit by explicitly constructing the state-field correspondence for the vector representation $V=\C^{N}$ using the W-algebra generators from Brundan-Kleshchev, refer to \cref{thm:whittaker_geq2} and \cref{thm:whittaker_geq3}. Denote by $E_{i,j} \in \gl_N$ the matrix units. Consider the two-dimensional subalgebra $\l = \Span(E_{2,1},E_{1,1})$. Let $x_{21},x_{11}\in \l$ be functions on $\l^*$ corresponding to $E_{2,1},E_{1,1}$. The main result of the paper is \cref{thm::semiclassical_limit}.
	\begin{theorem}
		The semi-classical limit $\mathbf{j}$ of $J_{UV}$ from \eqref{intro::tensor_iso2} is
		\[
		\mathbf{j} = \mathbf{j}_c + \sum\limits_{j=2}^{N-2} \sum\limits_{i=j+2}^{N} \sum\limits_{r=2}^{i-j} (-1)^{i-j-r} x_{21} x_{11}^{i-j-r} E_{1,r} \otimes E_{i,j} + \sum\limits_{i=4}^{N} \sum\limits_{r=2}^{i-2} (-1)^{i-r} x_{11}^{i-r-1} E_{1,r} \otimes E_{i,1}.
		\]
	\end{theorem}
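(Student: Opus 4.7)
The plan is to compute $\mathbf{j}$ directly from its defining description in \cref{sect::tensor} as the semi-classical limit of the exchange construction, using the explicit Whittaker vectors for the vector representation given by \cref{thm:whittaker_geq2} and \cref{thm:whittaker_geq3}. Since $J_{UV}$ is natural in $U, V \in \Rep(\GL_N)$, its semi-classical limit is a universal element in (a completion of) $U(\gl_N)^{\otimes 2} \otimes \cW$ which, after passing to the limit, takes values in the Poisson reduction of $\cW$ and retains a polynomial dependence on $\l^{*}$. Since an arbitrary element of $\gl_N^{\otimes 2}$ is determined by its action on $\C^{N} \otimes \C^{N}$, it suffices to take $U = V = \C^{N}$ and to read off the coefficients in the $E_{a,b} \otimes E_{c,d}$ basis.

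The explicit formulas of \cref{thm:whittaker_geq2,thm:whittaker_geq3} express the Whittaker vector in $V \otimes \cW$ in terms of the Brundan--Kleshchev generators of $\cW$. Feeding these into the algorithm for the monoidal isomorphism provided in \cref{sect::tensor} and passing to the semi-classical limit replaces each generator by its image in the associated graded. The $\l$-dependence that carries the dynamical parameters $x_{21}$ and $x_{11}$ appears because $\l$ is not contained in the centralizer of $e$, so the finite Drinfeld--Sokolov reduction of \cref{sect::ds_reduction} retains a non-trivial polynomial dependence on $\l^{*}$. After this substitution, the computation becomes algebraic: expand the nested products appearing in the state-field map, apply the semi-classical W-algebra relations, and extract the coefficient of each $E_{1,r} \otimes E_{i,j}$.

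The ``classical'' piece $\mathbf{j}_c$ is then the part independent of $x_{21}$ and $x_{11}$, obtained by restricting to the locus where they vanish; its expression corresponds to the principal-case output of \cite{KalmykovYangians} transported to the subregular setting. The remaining $\l^{*}$-valued corrections split into two families: the bulk summand $(-1)^{i-j-r} x_{21} x_{11}^{i-j-r} E_{1,r} \otimes E_{i,j}$ for $2 \leq j \leq N-2$ and $i \geq j+2$ comes from \cref{thm:whittaker_geq3}, while the boundary summand $(-1)^{i-r} x_{11}^{i-r-1} E_{1,r} \otimes E_{i,1}$ for $i \geq 4$ comes from \cref{thm:whittaker_geq2}; the fact that only $x_{11}$ (and not $x_{21}$) appears in the boundary is a direct reflection of the $j=1$ row of the Whittaker vector having a different structure from the $j \geq 2$ rows.

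The hardest step is the combinatorial bookkeeping of signs, shifts, and the correct powers of $x_{11}$ that arise when the triangular products in the Brundan--Kleshchev generators are expanded and the dynamical parameters are pulled through the composition of state-field maps. A clean way to organize this should be an induction on the difference $i-j-r$ (respectively $i-r-1$), which mirrors the recursive definition of the subregular generators and packages the telescoping cancellations into the alternating-sign pattern stated in the theorem.
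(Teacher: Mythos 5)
Your plan follows the paper's route at a high level---reduce to $U=V=\C^N$, feed the explicit Whittaker vectors into the monoidal structure, split into a constant and a dynamical piece, and extend by naturality---but it stops at exactly the point where the real work begins, and it contains one factual error that suggests you did not actually trace the computation through.

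The factual error: you attribute the bulk summand $(-1)^{i-j-r}x_{21}x_{11}^{i-j-r}E_{1,r}\otimes E_{i,j}$ (with $j\geq 2$) to \cref{thm:whittaker_geq3} and the boundary summand $(-1)^{i-r}x_{11}^{i-r-1}E_{1,r}\otimes E_{i,1}$ to \cref{thm:whittaker_geq2}. This is backwards. The bulk terms (target $E_{i,j}$, $j\geq 2$) come from the coefficients $(-1)^{j-i}\,{}_{i+1}T_{22;1}^{(j-i)}$ appearing in $\widetilde{v}_{N-j}^{\psi}$ of \cref{thm:whittaker_geq2}, while the boundary terms (target $E_{i,1}$) come from $\widetilde{v}_1^{\psi}$ of \cref{thm:whittaker_geq3}, whose coefficients involve ${}_{i+1}T_{12;1}^{(\cdot)}$. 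The reason only $x_{11}$ (and no $x_{21}$) appears in the boundary family is that the $\l$-linear expansion of $T_{12;1}^{(r)}$ has the shape $E_{1,r}E_{1,1}^k$ rather than $E_{1,r}E_{2,1}E_{1,1}^k$, i.e.\ it never starts from row 2 so it never picks up $E_{2,1}$. You gesture at this (``different structure'') but the attribution is inverted.

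The more serious issue is that the ``combinatorial bookkeeping'' you defer is precisely the proof. The paper must establish, before any expansion, that only two kinds of terms contribute to the first $\hbar$-power: the \emph{asymptotically linear} terms (degree-one PBW monomials with no $\hbar$-factor), which produce $\mathbf{j}_c$, and the \emph{asymptotically $\l$-linear} terms (a $\b$-generator times a nontrivial $\U(\l)$-monomial, no $\hbar$-factor), which produce the dynamical corrections. That is \cref{prop::asymptotically_linear_contribution}, proved by a commutator count showing quadratic-and-higher PBW terms contribute only at order $\hbar^2$. It must also show that the non-canonical form of $\widetilde{v}_i^\psi$ is harmless, which needs \cref{lm::constant_terms_t}: the $\l$-constant part of every $T_{ij;x}^{(r)}$ is $O(\hbar)$, so correcting to the canonical generators $v_i^\psi$ of \eqref{eq::whittaker_canonical_gens} does not alter the relevant coefficients (\cref{lm::asymptotically_linear_terms_same}). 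Finally, the explicit asymptotically linear and $\l$-linear parts of the $T$-generators must be read off from the Brundan--Kleshchev formula \eqref{eq::bk_t_elements} by a case analysis on the conditions \eqref{item::first}--\eqref{item::sixth}; this is \cref{prop::asymptotically_linear_part}, and it is a direct extraction, not a ``telescoping induction on $i-j-r$'' as you speculate. Your proposal does not supply any of these three ingredients, so as written it does not constitute a proof---it is a plan that names the destination without showing the path is passable.

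One more small point: saying the $\l$-dependence arises ``because $\l$ is not contained in the centralizer of $e$'' is not the mechanism the paper uses. The dependence on $x_{11}, x_{21}$ is present because $\l=\Span(E_{2,1},E_{1,1})$ sits inside $\cW$ (indeed $E_{1,1}$ and $E_{2,1}$ are, up to constants, the $T_{11;0}^{(1)}$ and $T_{21;1}^{(1)}$ generators), so the right $\cW$-action of these elements on $(1\otimes v_k)$ produces the $\ad$-terms that survive at order $\hbar$; whether $\l$ lies in $\g^e$ is beside the point.
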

	
	Here $\mathbf{j}_c$ is constant, and it comes from a Frobenius-like structure given by the trace pairing with the subregular nilpotent element $e$ on a certain subspace of $\gl_N$ which we call the \emph{(subregular) wonderbolic} subspace:
	$$\w=\begin{pmatrix}
		0 & *&\ldots & *  & 0 \\
		0 & * & \ldots & * & 0 \\
		* & * & \ldots & * & 0 \\
		\vdots & \vdots &  \ddots & \vdots & \vdots \\
		* & * & \ldots & * & 0 \\
	\end{pmatrix}. $$
	See \cref{sect::semiclassical} for the explicit form for $\mathbf{j}_c$. The wonderbolic subspace plays a similar role to that of the \emph{mirabolic subalgebra} in the regular case, which was the main motivation for the name.
	
	It would be interesting to obtain an invariant description of the ``dynamical'' part similar to the constant one.
	
	\subsection*{Organization of the paper} In \cref{sect::background}, we introduce general notions, including that of a module category, that we will use in the paper. In \cref{section:pyramid}, we recall the Brundan-Kleshchev's construction \cite{BrundanKleshchev} of W-algebras for $\gl_N$ in terms of pyramids. In \cref{sect::ds_reduction}, we present the exchange construction in categorical terms as a finite Drinfeld-Sokolov reduction functor from the category of Harish-Chandra bimodules. In \cref{sect::subregular}, we specialize to the case of subregular W-algebras: in \cref{subsect::whittaker_vectors_standard_representation}, we compute Whittaker vectors for the vector representation of $\gl_N$, and in \cref{sect::tensor}, we compute the corresponding monoidal structure.
	
	\subsection*{Acknowledgments} This research was
	conducted while B.L. was a participant in the MIT PRIMES program; we would like to thank the organizers for making this research opportunity possible. A.K. would like to thank the Department of Mathematics of Massachusetts Institute of Technology for hospitality.
		
		\section{Background}
		\label{sect::background}
		
		\renewcommand{\theequation}{\arabic{section}.\arabic{equation}}
		\renewcommand{\U}{\mathrm{U}_{\hbar}}
		\renewcommand{\HC}{\mathrm{HC}_{\hbar}}
		
		\subsection{General setup}
		In this section, we introduce general notations that we use in the paper. 
		
		We work over the field of complex numbers $\C$. Throughout the paper, we use $\hbar$-versions of constructions in questions. To avoid categorical complications, we treat $\hbar$ as a \emph{non-formal} parameter, i.e. $\hbar \in \C^{\times}$ (for instance, we can still deal with $\C$-linear categories). The reader may safely assume that $\hbar=1$. The only purpose of introducing $\hbar$ is to compute classical limits of certain formulas, and it will be clear from the context how to make sense of the corresponding $\hbar$-family over $\mathbb{A}^1$.
		\newline
		
		Let $\g$ be a Lie algebra.
		\begin{defn}
			\label{def::asympt_uea}
			The \defterm{asymptotic universal enveloping algebra} $\U(\g)$ of $\g$ is a tensor algebra over $\C$, generated by the vector space $\g$, with the relations
			\[
			xy - yx = \hbar [x,y],\ x,y\in \g.
			\]
			For any $x,y\in \U(\g)$, the \defterm{commutator} $[x,y]$ is
			\begin{equation}
				\label{eq::asymptotic_commutator}
				[x,y] := \frac{xy-yx}{\hbar}.
			\end{equation}
			Observe that it is well-defined over $\C[\hbar]$.
		\end{defn}
		
		\begin{remark}
			Usually, asymptotic universal enveloping algebras are defined over the polynomial ring $\C[\hbar]$. Here, as we mentioned at the beginning of the section, we treat $\hbar$ just as a complex number.
		\end{remark}
		
		In this paper, we will be dealing with a general linear Lie algebra. Let $\GL_N$ be the group of invertible $N \times N$-matrices and $\gl_N$ be its Lie algebra identified with the space of $N\times N$-matrices. We choose a natural basis $\{E_{i,j} | 1 \leq i,j \leq N\}$ of matrix units with the commutator
		\[
		[E_{i,j}, E_{k,l}] = \delta_{jk} E_{i,l} - \delta_{li} E_{k,j}.
		\]
		
		\subsection{Module categories}
		\label{subsect::module_categories}
		In this subsection, we recall the notion of a module category over a tensor category, for instance, see \cite[Chapter 7]{EGNO}.
		
		Recall that a monoidal category is a plain category $\cC$ equipped with a bifunctor $\otimes \colon \cC \times \cC \rightarrow \cC$ together with a unit object $\mathbf{1}_{\cC}$ and a natural isomorphism (associativity constraint)
		\[
		a_{X,Y,Z} \colon (X \otimes Y) \otimes Z \xrightarrow{\sim} X \otimes (Y \otimes Z),
		\]
		satisfying the \emph{unit} and \emph{pentagon} axioms, see \cite[Chapter 2]{EGNO}. Monoidal categories are categorical analogs of algebras. Likewise, there is a categorical analog of modules over an algebra.
		\begin{defn}
			\label{def::module_categories}
			Let $\cC$ be a monoidal category. A \defterm{(right) module category} over $\cC$ is a plain category $\cM$ equipped with a bifunctor
			\[
			\otimes \colon \cM \times \cC \rightarrow \cM
			\]
			with natural isomorphisms
			\[
			m_{M,X,Y} \colon M \otimes (X \otimes Y) \xrightarrow{\sim} (M \otimes X) \otimes Y,
			\]
			for all $M\in \cM,\ X,Y\in \cC$, such that the functor $M \otimes \mathbf{1}_{\cC} \mapsto M$ is an autoequivalence of $\cM$ and the associativity constraint $m$ satisfies the \emph{pentagon axiom}, shown below.
			\[
			\xymatrix{
				& M \otimes (X \otimes (Y \otimes Z) )\ar[dr]^{\id_M \otimes a_{X,Y,Z}^{-1}} \ar[dl]_{m_{M,X,Y\otimes Z}} \\
				(M \otimes X) \otimes (Y \otimes Z) \ar[d]_{m_{M\otimes X,Y,Z}} & & M \otimes ((X \otimes Y) \otimes Z) \ar[d]^{m_{M,X\otimes Y,Z}} \\
				((M \otimes X) \otimes Y) \otimes Z \ar[rr]^{m_{M,X,Y} \otimes \id_Z} & & (M \otimes (X \otimes Y)) \otimes Z
			}.
			\]
		\end{defn}
		
		\begin{example}
			Tautologically, any monoidal category $\cC$ is a module category over itself.
		\end{example}
		
		There is also a generalization of a homomorphism between algebra modules.
		\begin{defn}
			\label{def::module_categories_functor}
			Let $\cC$ be a monoidal category and $\cM_1,\cM_2$ be two module categories over $\cC$. A \defterm{functor of $\cC$-module categories} is a plain functor $F\colon \cM_1 \rightarrow \cM_2$ with a collection of natural isomorphisms
			\begin{equation}
				\label{eq::module_categories_functor}
				J_{M,X} \colon F(M \otimes X) \rightarrow F(M) \otimes X
			\end{equation}
			for all $M\in \cM,\ X\in \cC$, satisfying the compatibility condition
			\[
			\xymatrix{
				& F(M \otimes (X\otimes Y)) \ar[dr]^{F(m_{M,X,Y})} \ar[dl]_{J_{M,X\otimes Y}} & \\
				F((M \otimes X) \otimes Y) \ar[d]_{J_{M\otimes X,Y}} & & F(M) \otimes (X \otimes Y) \ar[d]^{n_{F(M),X,Y}} \\
				F(M\otimes X) \otimes Y \ar[rr]^{J_{M,X} \otimes \id_Y} & & (F(M) \otimes X) \otimes Y
			}
			\]
			and 
			\[
			\xymatrix{
				F(M \otimes \mathbf{1}_{\cC}) \ar[dr] \ar[rr]^{J_{M,\mathbf{1}_{\cC}}} & & F(M) \otimes \mathbf{1}_{\cC} \ar[dl] \\
				& F(M) & 
			}
			\]
			where the diagonal arrows come from the corresponding unit autoequivalences.
		\end{defn}

		\section{W-algebras for $\gl_N$}
		\label{section:pyramid}
		
		In general, a \emph{finite W-algebra} is associated to a pair $(\g,e)$ of a reductive Lie algebra $\g$ and a nilpotent element $e\in \g$, see \cite{Losev} for a survey of the subject. From now on, we will be interested in the case $\g = \gl_N$; according to \cite{BrundanKleshchev}, the W-algebras in this case admit a description in terms of combinatorial objects called \emph{pyramids}. In what follows, we recall this description; for the details and proofs, we refer the reader to \emph{loc. cit.}
		
		\begin{defn}\cite[Section 7]{BrundanKleshchev}
			\label{def::pyramid}
			A \defterm{pyramid} $\pi$ is a sequence of positive numbers $(q_1,\ldots,q_l)$, the \emph{column heights}, such that $\sum\limits_{i=1}^l q_i = N$ and 
			\[
			0 < q_1  \leq \dots \leq q_k, \quad q_{k+1} \geq \dots \geq q_l > 0
			\]
			for some $k \leq l$. The \defterm{maximal height} $n$ is $\max(q_1,\ldots,q_l)$.
		\end{defn}
		We number the blocks starting from top to bottom and from left to right; for each $i$, we denote by $\col(i)$ (resp. $\row(i)$) the corresponding column (resp. row) of $i$ counted from left to right (resp. from top to bottom). Here is an example of a pyramid
		\begin{equation*}
			\label{eq::pyramid}
			\begin{picture}(65, 45)%
				\put(0,0){\line(1,0){60}}
				\put(0,15){\line(1,0){60}}
				\put(15,30){\line(1,0){30}}
				\put(15,45){\line(1,0){15}}
				\put(0,0){\line(0,1){15}}
				\put(15,0){\line(0,1){45}}
				\put(30,0){\line(0,1){45}}
				\put(45,0){\line(0,1){30}}
				\put(60,0){\line(0,1){15}}
				\put(7,8){\makebox(0,0){1}}
				\put(22,8){\makebox(0,0){4}}
				\put(22,38){\makebox(0,0){2}}
				\put(22,23){\makebox(0,0){3}}
				\put(37,8){\makebox(0,0){6}}
				\put(37,23){\makebox(0,0){5}}
				\put(52,8){\makebox(0,0){7}}
				\put(-20,18){\makebox(0,0){$\pi =$}}
			\end{picture}
		\end{equation*}
		with column heights $(q_1,q_2,q_3,q_4)=(1,3,2,1)$. For instance, $\col(3) = 2$ and $\row(3) = 2$. Observe the rows must be in non-decreasing order.
		
		In what follows, we will need an inductive structure on pyramids.
		\begin{defn}
			\label{def::pyramid_truncation}
			For a pyramid $\pi$, the \defterm{k-th truncated pyramid} ${}_k \pi$ is $\pi$ without the last $k$ rightmost columns. We denote by ${}_k N$ the number of blocks in ${}_k \pi$. When $k=1$, we also denote $\dot{\pi} := {}_1 \pi$.
		\end{defn} 
		
		For instance, we have
		\[
		\begin{picture}(65, 45)%
			\put(0,0){\line(1,0){45}}
			\put(0,15){\line(1,0){45}}
			\put(15,30){\line(1,0){30}}
			\put(15,45){\line(1,0){15}}
			\put(0,0){\line(0,1){15}}
			\put(15,0){\line(0,1){45}}
			\put(30,0){\line(0,1){45}}
			\put(45,0){\line(0,1){30}}
			\put(7,8){\makebox(0,0){1}}
			\put(22,8){\makebox(0,0){4}}
			\put(22,38){\makebox(0,0){2}}
			\put(22,23){\makebox(0,0){3}}
			\put(37,8){\makebox(0,0){6}}
			\put(37,23){\makebox(0,0){5}}
			\put(-20,18){\makebox(0,0){$_1\pi =$}}
		\end{picture}
		\]
		for $\pi$ as above. Introduce a $\Z$-grading on $\g = \bigoplus_{j\in \Z} \g_j$ by declaring that $\deg(E_{ij}) = \col(j) - \col(i)$. Let 
		\begin{equation}
			\label{eq::pyramid_nilpotent_subalgebra}
			\p = \bigoplus\limits_{j \geq 0} \g_j, \qquad \m = \bigoplus\limits_{j < 0} \g_j.
		\end{equation}
		
		Similarly to \cref{def::pyramid_truncation}, we will use an inductive structure on the algebras.
		\begin{defn}
			\label{def::subalgebras_truncation}
			The \defterm{k-th truncated nilpotent subalgebra} ${}_k\m$ (resp. \defterm{k-th truncated parabolic subalgebra}) is the nilpotent subalgebra (resp. the parabolic subalgebra) associated to the truncated pyramid ${}_k \pi$. Alternatively, we denote them by $\m_{{}_k N}$ (resp. $\p_{{}_k N}$) if the truncation is clear from the context.
		\end{defn}
		
		To a pyramid $\pi$, we assign a nilpotent element $e$ defined by
		\begin{equation}
			\label{eq::nilpotent}
			e:= \sum\limits_{\substack{1 \leq i,j \leq N \\ \row(i) = \row(j) \\ \col(i) = \col(j) -1}} E_{i,j}.
		\end{equation}
		For instance, for pyramid \eqref{eq::pyramid}, we have $e = E_{3,5} + E_{1,4} + E_{4,6} + E_{6,7}$. 
		
		The nilpotent element $e$ defines a character $\psi$ of $\m$:
		\[
		\psi \colon \m \rightarrow \C, \qquad x \mapsto \Tr(ex).
		\]
		Denote by 
		\begin{equation}
			\label{eq::whittaker_module}
			Q := \U(\g) \otimes_{\U(\m)} \C^{\psi},
		\end{equation}
		where $\m$ acts on $\C^{\psi}$ via the character $\psi$. It is naturally a left $\U(\g)$-module. As a vector space, we can identify 
		\begin{equation}
			\label{eq::quotient_identification}
			Q \cong \U(\p)
		\end{equation}
		by the PBW theorem.
		
		For any $\xi \in \m$, denote by $\xi^{\psi} = \xi - \psi(\xi)$, and define the shift
		\begin{equation}
			\label{eq::nilpotent_shift}
			\m^{\psi} = \Span(\xi^{\psi}| \xi \in \m) \subset \U(\g).
		\end{equation}
		\begin{defn}\cite[Section 8]{BrundanKleshchev}
			A \defterm{finite W-algebra} $\cW$, associated to the nilpotent element $e$ \eqref{eq::nilpotent}, is the space 
			\[
			\cW := Q^{\m^{\psi}} = \{ w \in Q| \xi^{\psi} w = 0\ \forall \xi \in \m\}
			\]
			of $\m^{\psi}$-invariant vectors in $Q$.
		\end{defn}
		
		In \cite{BrundanKleshchev}, the authors introduced explicit generators of $\cW$ whose construction we recall now. Observe that in \emph{loc. cit.}, the authors use the version with $\hbar=1$; to pass to the ``asymptotic'' version, one can use the Rees construction with respect to the \emph{Kazhdan filtration}, see \cite[(8.3)]{BrundanKleshchev}. We will explicitly indicate how to modify the corresponding definitions and statements.
		
		Let $\rho_{\pi,r}=n-\sum\limits_{k=r}^{l}q_k$. Introduce modified generators
		\begin{equation}
			\label{eq::modified_generators}
			\widetilde{E}_{i,j}=(-1)^{\mathrm{col}(j)-\mathrm{col}(i)}(E_{i,j}+\delta_{ij} \hbar \rho_{\pi,\mathrm{col}(i)})
		\end{equation} 
		for all $1 \le i,j \le n$.
		\begin{defn}
			\label{def::kazhdan_filtration}
			The \defterm{Kazhdan filtration} on $\U(\g)$ is defined by declaring that $\deg(E_{i,j}) = \col(j) - \col(i) + 1$. 
		\end{defn}
		In particular, assigning $\deg(\hbar)=1$, we see that this modification preserves the filtration. 
		\newline
		\newline
		
		Let $1 \leq x \leq n$. 
		\begin{defn}\cite[Section 9]{BrundanKleshchev}
			\label{def:t_els}
			Consider the set of signs $\sigma_1=\sigma_2=\ldots=\sigma_x=-, \sigma_{x+1}=\ldots=\sigma_{n}=+$. For any $1 \leq i,j \leq N$, define $T_{ij,x}^{(0)} := \delta_{ij} \sigma_i$ and, for $r>0$,
			\begin{equation}
				\label{eq::bk_t_elements}
				T_{ij;x}^{(r)}=\sum_{s=1}^{r}\sum\limits_{\substack{i_1,\ldots,i_s \\ j_1,\ldots,j_s}}^{}\sigma_{\mathrm{row}(j_1)}\cdots \sigma_{\mathrm{row}(j_{s-1})}\widetilde{E}_{i_1,j_1}\cdots \widetilde{E}_{i_s,j_s},
			\end{equation}
			with the following conditions on $1 \le i_1,\ldots,i_s,j_1,\ldots,j_s \le N$:
			\begin{enumerate}
				\item \label{item::first} $\mathrm{row}(i_1)=i, \mathrm{row}(j_s)=j$
				\item \label{item::second} $\mathrm{row}(j_k)=\mathrm{row}(i_{k+1}) \text{ for all } 1 \le k \le s-1$
				\item \label{item::bk_t_parabolic} $\mathrm{col}(i_k) \le \mathrm{col}(j_k) \text{ for all } 1 \le k \le s$
				\item \label{item::degree} $\sum\limits_{k=1}^{s}(\mathrm{col}(j_k)-\mathrm{col}(i_k)+1)=r$
				\item \label{item::fifth} If $\sigma_{\mathrm{row}(j_k)}=+$, then $\mathrm{col}(j_k) < \mathrm{col}(i_{k+1}) \text{ for all } 1 \le k \le s-1$.
				\item \label{item::sixth} If $\sigma_{\mathrm{row}(j_k)}=-$, then $\mathrm{col}(j_k) \ge \mathrm{col}(i_{k+1}) \text{ for all } 1 \le k \le s-1$. 
			\end{enumerate}
		\end{defn}
		
		It follows from condition \eqref{item::bk_t_parabolic} that $T_{ij;x}^{(r)} \in \U(\p)$ by \eqref{eq::quotient_identification}. Condition \eqref{item::degree} can be equivalently reformulated that the degree of $T_{ij;x}^{(r)}$ with respect to the Kazhdan filtration is $r$. For instance, when $r=1$, it can be shown that
		\begin{equation}
			\label{eq::bk_generator_degree_one}
			T_{ij;x}^{(1)}=\sum\limits_{\substack{1 \le h,k \le n \\ \mathrm{col}(h)=\mathrm{col}(k) \\ \mathrm{row}(h)=i, \mathrm{row}(k)=j}} \widetilde{E}_{h,k}.
		\end{equation}
		
		Using these elements, the authors of \cite{BrundanKleshchev} constructed generators of a W-algebra corresponding to a pyramid $\pi$. We will give a precise statement for the subregular case in \cref{sect::subregular}.
		
		We will also need an inductive structure on these elements. Recall \cref{def::pyramid_truncation}. Let $\gl_{{}_k N}$ be the Lie algebra corresponding to the truncated pyramid ${}_k\pi$. Denote by ${}_k \widetilde{E}_{i,j}$ the corresponding modified generators \eqref{eq::modified_generators}. Consider a (non-standard) embedding
		\begin{equation}
			\label{eq::nonstandard_embedding}
			\iota \colon \U(\gl_{{}_k N}) \rightarrow \U(\gl_N),\ \iota({}_k \widetilde{E}_{i,j}) =  \widetilde{E}_{i,j}.
		\end{equation}
		Define the truncated analog of elements \eqref{eq::bk_t_elements} as
		\begin{equation}
			\label{eq::bk_t_elements_truncated}
			{}_k T_{ij;x}^{(r)} := \iota(T_{ij;x}^{(r)}).
		\end{equation}
		
		\section{Finite Drinfeld-Sokolov reduction}
		\label{sect::ds_reduction}

		\subsection{Harish-Chandra bimodules}
		\label{subsect::hc_bimodules}
		Let $G$ be an affine algebraic group over $\C$ and $\g$ be its Lie algebra. Denote by $\U(\g)$ the universal enveloping algebra of $\g$ as in \cref{def::asympt_uea}. Let $\Rep(G)$ be the category of $G$-representations. Naturally, $\U(\g)$ is an object in $\Rep(G)$. 
		
		\begin{defn}
			A \defterm{Harish-Chandra bimodule} is a left $\U(\g)$-module $X$ in the category $\Rep(G)$. In other words, it has a structure of a $G$-representation and a left $\U(\g)$-module such that the action morphism
			\[
			\U(\g) \otimes X \rightarrow X
			\]
			is a homomorphism of $G$-representations. The category of Harish-Chandra bimodules is denoted by $\HC(G)$.
		\end{defn}
		
		There is a natural right $\U(\g)$-module structure on any Harish-Chandra bimodule $X$ (justifying the name). Namely, for $\xi \in\g$, denote by $\ad_{\xi}\colon X \rightarrow X$ the derivative of the $G$-action on $X$ along $\xi$. Then we can define
		\begin{equation}
			\label{eq::hc_right_action}
			x\xi := \xi x - \hbar \ad_{\xi}(x),\ x\in X,
		\end{equation}
		and extend it to a right $\U(\g)$-action. Therefore, the category $\HC(G)$ is a subcategory of $\U(\g)$-bimodules, hence is equipped with a tensor structure:
		\[
		X \otimes^{\HC(G)} Y := X \otimes_{\U(\g)} Y.
		\]
		
		There is a natural functor of the so-called \defterm{free Harish-Chandra bimodules}:
		\begin{equation}
			\label{eq::free_hc_bimodules}
			\free \colon \Rep(G) \rightarrow \HC(G), \qquad V \mapsto \U(\g) \otimes V.
		\end{equation}
		One can check that this functor is monoidal. In fact, all Harish-Chandra bimodules can be ``constructed'' from the free ones.
		\begin{prop}\cite[Proposition 2.7]{KalmykovSafronov}
			\label{prop::hc_generators}
			The category $\HC(G)$ is generated by $\free(V)$ for $V \in \Rep(G)$.
		\end{prop}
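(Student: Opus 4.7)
The plan is to exhibit the free functor $\free \colon \Rep(G) \to \HC(G)$ of \eqref{eq::free_hc_bimodules} as a left adjoint to the forgetful functor $\forget \colon \HC(G) \to \Rep(G)$, and then to show that the counit of this adjunction gives a surjection from a free Harish-Chandra bimodule onto an arbitrary object, whose kernel is itself in $\HC(G)$. Iterating this once yields a presentation of every Harish-Chandra bimodule by free ones, which is the content of the statement.

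First I would check the adjunction $\free \dashv \forget$. Given $X\in \HC(G)$ and $V\in \Rep(G)$, a morphism $\U(\g)\otimes V\to X$ in $\HC(G)$ is by definition a $G$-equivariant left $\U(\g)$-module map. Restriction along $V \hookrightarrow \U(\g)\otimes V$, $v\mapsto 1\otimes v$, gives a $G$-equivariant map $V\to X$, and conversely any such equivariant map extends uniquely to a $\U(\g)$-linear one by acting on the first tensor factor. Equivariance of the extension follows because the $G$-action on $\U(\g)$ is by algebra automorphisms compatible with the $\U(\g)$-action on $X$.

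Next, given $X\in\HC(G)$, the counit
\[
\varepsilon_X \colon \free(\forget(X)) = \U(\g) \otimes X \longrightarrow X, \qquad u\otimes x \mapsto u\cdot x,
\]
is a morphism in $\HC(G)$. It is surjective since $1\otimes x\mapsto x$, so in particular its kernel $K_X$ is a $G$-subrepresentation and a sub-$\U(\g)$-module of $\free(\forget(X))$, hence an object of $\HC(G)$. Applying $\varepsilon$ to $K_X$ and composing produces an exact sequence
\[
\free(\forget(K_X)) \xrightarrow{\ \ } \free(\forget(X)) \xrightarrow{\varepsilon_X} X \longrightarrow 0
\]
in $\HC(G)$, exhibiting $X$ as a cokernel of a morphism between free Harish-Chandra bimodules. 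This proves that the full subcategory closed under cokernels and generated by $\{\free(V)\mid V\in \Rep(G)\}$ is all of $\HC(G)$.

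The only genuinely non-formal step is the verification of the adjunction, and even there the work amounts to unwinding definitions. Once the adjunction is in hand, the existence of the two-term presentation is standard abelian category nonsense: the forgetful functor is exact and faithful, its left adjoint $\free$ produces enough projective-like generators, and the counit is then automatically a pointwise surjection.
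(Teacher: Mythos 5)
The paper defers to \cite{KalmykovSafronov} and gives no proof of its own, so there is nothing in the source to compare against directly. Your argument is correct and is the standard one: the free--forgetful adjunction $\free \dashv \forget$, together with faithfulness of $\forget$, makes each counit $\varepsilon_X \colon \U(\g)\otimes X \to X$ an epimorphism in $\HC(G)$ with kernel again in $\HC(G)$, yielding a two-term presentation of every object by free Harish-Chandra bimodules; this is precisely the bar-resolution-style argument used in the cited reference.
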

		
		\subsection{Drinfeld-Sokolov reduction}
		\label{subsect::ds_reduction}
		Now let restrict to the case $G = \GL_N$ and $\g = \gl_N$. We use the notations from \cref{section:pyramid}, in particular, we fix a pyramid $\pi$ and consider the corresponding nilpotent subalgebra $\m$ with a character $\psi\in \m^*$.
		\begin{defn}
			A \defterm{Whittaker module} is a left $\U(\g)$-module $M$ such that the action of $\m^{\psi}$ from \eqref{eq::nilpotent_shift} is locally nilpotent. A \defterm{Whittaker vector} is an $\m^{\psi}$-invariant vector in $m\in M$, i.e. satisfying
			\[
			\xi^{\psi} m = 0\ \text{ for all } \xi \in \m.
			\]
			The space of Whittaker vectors is denoted by $M^{\m^{\psi}}$.
		\end{defn} 
		
		\begin{example}
			In $\mathfrak{gl}_2$, the series $$P^{\psi}=\sum_{k=0}^{\infty}(-1)^{k}\frac{E_{1,1}(E_{1,1}+1)\cdots(E_{1,1}+k-1)}{k!}(E_{2,1}-1)^k$$ is $(E_{21}-1)$-invariant on the left action and generates the Whittaker vectors; see \cite{Kalmykov} for a version in the left quotient.
		\end{example}
		
		Denote by $\Wh$ the category of $(\U(\g),\cW)$-bimodules that are Whittaker with respect to the $\U(\g)$-action. Naturally, the quotient $Q$ from \eqref{eq::whittaker_module} is an object of $\Wh$. In particular, it defines an action functor
		\begin{equation*}
			\act_{\g}^{\psi} \colon \HC(G) \rightarrow \Wh, \qquad X \mapsto X \otimes_{\U(\g)} Q.
		\end{equation*}
		Likewise, there is an action 
		\begin{equation}
			\act_{\cW} \colon \BiMod{\cW}{\cW} \rightarrow \Wh, \qquad Y \mapsto Q \otimes_{\cW} Y.
		\end{equation}
		Consider the functor
		\[
		(-)^{\m^{\psi}} \colon \Wh \rightarrow \BiMod{\cW}{\cW}
		\]
		of Whittaker invariants sending a Whittaker module to its space of Whittaker vectors. 
		
		The following result is a direct consequence of \emph{Skryabin's equivalence} \cite{Premet}.
		\begin{thm}
			The functor $(-)^{\m^{\psi}}$ is an equivalence.
		\end{thm}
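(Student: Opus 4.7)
The plan is to bootstrap this from the one-sided Skryabin equivalence, which gives an equivalence between Whittaker left $\U(\g)$-modules and left $\cW$-modules via $M \mapsto M^{\m^{\psi}}$, with quasi-inverse $N \mapsto Q \otimes_{\cW} N$. The entire content of the bimodule statement is that the two extra right $\cW$-actions on either side correspond to each other under the one-sided equivalence.

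First I would check that the invariants functor lifts to bimodules: given $M \in \Wh$, the right $\cW$-action on $M$ commutes with the left $\U(\g)$-action by the bimodule axiom, so it preserves the subspace $M^{\m^{\psi}}$. Equipping $M^{\m^{\psi}}$ with this right $\cW$-action alongside the Skryabin left $\cW$-action produces a $\cW$-bimodule (commutativity of the two $\cW$-actions is inherited from commutativity of the corresponding actions on $M$, once one recalls that the Skryabin left $\cW$-action is implemented by elements of $Q^{\m^{\psi}} \subset Q$ acting via a suitable lift of the left $\U(\g)$-action). This yields a functor $(-)^{\m^{\psi}} \colon \Wh \to \BiMod{\cW}{\cW}$.

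Next, I would construct the candidate quasi-inverse. For $N \in \BiMod{\cW}{\cW}$, form $Q \otimes_{\cW} N$: the left $\U(\g)$-action lives on the $Q$ factor (and is Whittaker since it already was on $Q$), while the right $\cW$-action lives on the $N$ factor, so the two actions manifestly commute and give an object of $\Wh$. Skryabin's theorem says that the unit $N \to (Q \otimes_{\cW} N)^{\m^{\psi}}$ and counit $Q \otimes_{\cW} M^{\m^{\psi}} \to M$ are isomorphisms of vector spaces, compatible with the left $\cW$- and $\U(\g)$-actions respectively; I just need to observe that they are also morphisms of right $\cW$-modules, which is immediate from the construction since the right $\cW$-action is transported by the $N$-factor on one side and by restriction from $M$ on the other.

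There is no real obstacle: the whole argument is a formal unpacking of Skryabin's equivalence. The only thing to be careful about is making sure that the left $\cW$-action on $M^{\m^{\psi}}$ coming from Skryabin (which is defined via the right $\U(\g)$-action \eqref{eq::hc_right_action} and the identification $\cW \hookrightarrow Q$) is distinct from and commutes with the ambient right $\cW$-action carried by $M$; once that is separated, bimodule compatibility is automatic.
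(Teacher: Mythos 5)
Your argument is a correct unpacking of what the paper dispatches with a one-line citation to Skryabin's equivalence \cite{Premet}: restrict the one-sided equivalence to the extra right $\cW$-module structures and observe that they are carried through, so the approach matches. One small slip in your closing parenthetical: the Skryabin left $\cW$-action on $M^{\m^{\psi}}$ is \emph{not} defined via the right $\U(\g)$-action \eqref{eq::hc_right_action} --- that formula is specific to Harish-Chandra bimodules, and a general object of $\Wh$ carries no right $\U(\g)$-action at all; the correct description is the one you yourself give in your second paragraph, namely lifting $w \in \cW = Q^{\m^{\psi}} \subset Q \cong \U(\p)$ to $\U(\g)$ and acting through the left $\U(\g)$-module structure (well-defined on Whittaker vectors since the ambiguity lies in $\U(\g)\m^{\psi}$). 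This does not affect the validity of the proof.
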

		This motivates the following definition.
		\begin{defn}
			\label{def::ds_reduction}
			The \defterm{(finite) Drinfeld-Sokolov reduction} is the functor
			\[
			\res^{\psi}\colon \HC(\GL_N) \rightarrow \BiMod{\cW}{\cW}, \qquad X \mapsto (X \otimes_{\U(\g)} Q)^{\m^{\psi}}.
			\]
		\end{defn}
		In what follows, for any Harish-Chandra bimodule $X$, we denote
		\[
		X /\m^{\psi} := X \otimes_{\U(\g)} Q.
		\]
		\begin{remark}
			\label{rmk::whittaker_adjoint}
			There is an equivalent presentation of the Drinfeld-Sokolov reduction that we will use later in the paper. Namely, recall the adjoint $\gl_N$-action from \cref{subsect::hc_bimodules}. For any Harish-Chandra bimodule $X$, define
			\[
			\ad_m([x]) := [\ad_m(x)]\in X/\m^{\psi},\ m\in \m, [x] \in X/\m^{\psi}.
			\]
			We will also use the notation
			\[
			[m,x] := \ad_m([x]),\ m\in \m, x\in X,
			\]
			if the quotient is clear from the context. Since $\psi$ is a character, this action is well-defined. One can easily see that 
			\[
			\hbar\cdot\ad_m([x]) = m^{\psi}\cdot [x].
			\]
			In particular, the space of Whittaker vectors in $X/\m^{\psi}$ can be identified with the space of $\ad_{\m}$-invariant vectors in $X/\m^{\psi}$.
		\end{remark}
		
		As in \cite[Corollary 4.18]{KalmykovSafronov}, we obtain the following.
		\begin{thm}
			\label{thm::ds_reduction_monoidal}
			The Drinfeld-Sokolov reduction is colimit-preserving and monoidal.
		\end{thm}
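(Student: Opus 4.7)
The plan is to reduce the theorem to two separate assertions and derive each one by leveraging Skryabin's equivalence, mirroring the argument in \cite[Corollary 4.18]{KalmykovSafronov}.

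For colimit preservation, I would observe that $\res^{\psi}$ factors as the composition of the induction functor $\act_{\g}^{\psi} = (-) \otimes_{\U(\g)} Q \colon \HC(\GL_N) \to \Wh$ followed by Whittaker invariants $(-)^{\m^{\psi}} \colon \Wh \to \BiMod{\cW}{\cW}$. The first factor is a relative tensor product with a fixed $(\U(\g),\cW)$-bimodule, hence is a left adjoint, and in particular preserves all colimits. The second is an equivalence of categories by Skryabin's theorem, so it also preserves colimits. Composition yields the claim.

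For monoidality, the candidate comparison isomorphism
\[
\res^{\psi}(X) \otimes_\cW \res^{\psi}(Y) \xrightarrow{\sim} \res^{\psi}(X \otimes_{\U(\g)} Y)
\]
is produced by applying the quasi-inverse $Q \otimes_\cW (-)$ of Skryabin to both sides and identifying the resulting Whittaker modules. Concretely, the isomorphism $Q \otimes_\cW \res^{\psi}(Z) \cong Z \otimes_{\U(\g)} Q$ (a restatement of Skryabin), applied twice, yields
\[
Q \otimes_\cW \bigl(\res^{\psi}(X) \otimes_\cW \res^{\psi}(Y)\bigr) \cong (X \otimes_{\U(\g)} Q) \otimes_\cW \res^{\psi}(Y) \cong X \otimes_{\U(\g)} Y \otimes_{\U(\g)} Q,
\]
which is precisely $Q \otimes_\cW \res^{\psi}(X \otimes_{\U(\g)} Y)$; since $Q \otimes_\cW (-)$ is an equivalence, the comparison isomorphism of $\cW$-bimodules follows. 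The unit constraint $\res^{\psi}(\U(\g)) \cong \cW$ is exactly the defining identification $Q^{\m^\psi} = \cW$.

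The main obstacle is verifying the pentagon and unit coherences for these comparison maps. I would handle this by recasting the construction in the language of \cref{subsect::module_categories}: $\HC(\GL_N)$ acts on $\Wh$ on the left via $\act_{\g}^{\psi}$, while $\BiMod{\cW}{\cW}$ acts on $\Wh$ on the right via $\act_\cW$, and these two actions commute compatibly through the $(\U(\g),\cW)$-bimodule structure on $Q$. Since Skryabin's equivalence is a functor of right $\BiMod{\cW}{\cW}$-module categories, the composition gives a monoidal functor $\HC(\GL_N) \to \End_{\BiMod{\cW}{\cW}}(\BiMod{\cW}{\cW}) \cong \BiMod{\cW}{\cW}$, where the last identification is by the Eilenberg-Watts-style evaluation at $\cW$. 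The coherence axioms are then inherited from the associativity of relative tensor product and the bimodule structure on $Q$, making the direct diagrammatic verification unnecessary.
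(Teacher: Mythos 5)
Your proposal is essentially the same approach as the paper's (which itself simply defers to \cite[Corollary 4.18]{KalmykovSafronov}): factor $\res^{\psi}$ through $\Wh$ as $\act_{\g}^{\psi}$ followed by Whittaker invariants, use that $(-)\otimes_{\U(\g)} Q$ preserves colimits, invoke Skryabin's equivalence for the second factor, and obtain the comparison isomorphism $\res^{\psi}(X)\otimes_{\cW}\res^{\psi}(Y)\xrightarrow{\sim}\res^{\psi}(X\otimes_{\U(\g)}Y)$ by transporting the associativity of the relative tensor product through the quasi-inverse — which reproduces the paper's explicit formula $[x]\otimes[y]\mapsto[x\otimes y]$ in \eqref{eq::explicit_tensor_structure}. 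Your additional remarks on handling the pentagon and unit coherences via the module-category structure (the left $\HC(\GL_N)$-action and the right $\BiMod{\cW}{\cW}$-action on $\Wh$ commuting through the bimodule $Q$, together with the Eilenberg--Watts identification) supply details the paper omits in favor of the citation, but they are consistent with and implicit in the cited argument.
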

		Explicitly, the monoidal structure is given by the usual product on quantum Hamiltonian reductions:
		\begin{equation}
			\label{eq::explicit_tensor_structure}
			(X/\m^{\psi})^{\m^{\psi}} \otimes_{\cW} (Y/\m^{\psi})^{\m^{\psi}} \xrightarrow{\sim} (X \otimes_{\U(\g)} Y / \m^{\psi})^{\m^{\psi}},\ [x] \otimes [y] \mapsto [x \otimes y].
		\end{equation}
		
		In particular, composing with the monoidal functor of free Harish-Chandra bimodules \eqref{eq::free_hc_bimodules}, we get a monoidal functor:
		\begin{equation}
			\label{eq::ds_reduction_funtor_rep_g}
			\Rep(G) \rightarrow \BiMod{\cW}{\cW}.
		\end{equation}
		We study its properties in the next section.

		\section{Subregular case}
		\label{sect::subregular}
		In this section, we apply the finite Drinfeld-Sokolov reduction to subregular W-algebras and study its tensor properties.
		
		\subsection{Pyramid}
		Recall from \cref{section:pyramid} that W-algebras for $\gl_N$ are described by pyramids. In the subregular case, the corresponding pyramid is 
		\begin{equation}
			\label{eq::subregular_pyramid}
			\begin{picture}(150, 30)%
				\put(0,0){\line(1,0){60}}
				\put(0,15){\line(1,0){60}}
				\put(0,30){\line(1,0){30}}
				\put(90,0){\line(1,0){60}}
				\put(90,15){\line(1,0){60}}
				\put(0,0){\line(0,1){30}}
				\put(30,0){\line(0,1){30}}
				\put(60,0){\line(0,1){15}}
				\put(90,0){\line(0,1){15}}
				\put(120,0){\line(0,1){15}}
				\put(150,0){\line(0,1){15}}
				\put(15,23){\makebox(0,0){1}}
				\put(15,8){\makebox(0,0){2}}
				\put(45,8){\makebox(0,0){3}}
				\put(105,8){\makebox(0,0){$N-1$}}
				\put(135,8){\makebox(0,0){$N$}}
				\put(75,4){\makebox(0,0){\ldots}}
				\put(-20,12){\makebox(0,0){$\pi =$}}
			\end{picture}
		\end{equation}
		and by \eqref{eq::nilpotent}, the subregular nilpotent is given by 
		\begin{equation}
			\label{eq::subregular_nilpotent}
			e = E_{2,3} + \ldots + E_{N-1,N}.
		\end{equation}
		The nilpotent algebra $\m$ is 
		\[\m = \Span (E_{i,j} | 3 \leq i \leq N, j < i), \]
		namely, 
		\begin{equation}
			\label{eq::subregular_nilpotent_subalgebra}
			\m = \begin{pmatrix}
				0 & 0& 0 &\ldots & 0  & 0 \\
				0 & 0 &  0 &\ldots & 0 & 0 \\
				* & * & 0 & \ldots & 0 & 0 \\
				* & * & * & \ldots & 0 & 0 \\
				\vdots & \vdots & \vdots&  \ddots & \vdots & \vdots \\
				* & * & * &\ldots & * & 0 \\
			\end{pmatrix}.
		\end{equation}
		The parabolic subalgebra $\p$ is 
		\begin{equation}
			\label{eq::subregular_parabolic_subalgebra}
			\p = \begin{pmatrix}
				* & *& * &\ldots & *  & * \\
				* & * &  * &\ldots & * & * \\
				0 & 0 & * & \ldots & * & * \\
				0 & 0 & 0 & \ldots & * & * \\
				\vdots & \vdots & \vdots&  \ddots & \vdots & \vdots \\
				0 & 0 & 0 &\ldots & 0 & * \\
			\end{pmatrix}.
		\end{equation}
		
		\subsection{Semi-classical limit}
		\label{sect::semiclassical}
		It turns out that the semi-classical limits of the tensor structure on \eqref{eq::ds_reduction_funtor_rep_g} is intrinsically related not to the whole Lie algebra $\gl_N$, but to its subspace of a certain almost parabolic subalgebra.
		\begin{defn}
			The \defterm{subregular wonderbolic subspace} $\w$ (for the rest of the paper, simply \defterm{wonderbolic subspace}) is the subspace of matrices of the form
			\[
			\w = \begin{pmatrix}
				0 & *&\ldots & *  & 0 \\
				0 & * & \ldots & * & 0 \\
				* & * & \ldots & * & 0 \\
				\vdots & \vdots &  \ddots & \vdots & \vdots \\
				* & * & \ldots & * & 0 \\
			\end{pmatrix}.
			\]
		\end{defn}
		
		While the subregular nilpotent element $e$ from \eqref{eq::subregular_nilpotent} does not lie in $\w$, it defines the following 2-form on $\w$:
		\[
		\omega\colon \w \wedge \w \rightarrow \C,\ x \wedge y \mapsto \Tr(e\cdot [x,y]).
		\]
		
		Observe that the nilpotent subalgebra $\m$ from \eqref{eq::subregular_nilpotent_subalgebra} lies in $\w$, moreover, it is isotropic with respect to $\omega$. A natural complement is given by the Borel subalgebra
		\[
		\b = \Span (E_{k,l} | 1 \leq k \leq l \leq N-1, 2 \leq l).
		\]
		Namely, 
		\begin{equation}
			\label{eq::wonderbolic_borel}
			\b = \begin{pmatrix}
				0 & *& * &\ldots & *  & 0 \\
				0 & * & * & \ldots & * & 0 \\
				0 & 0 & * & \ldots & * & 0 \\
				\vdots & \vdots & \vdots &   \ddots & \vdots & \vdots \\
				0 & 0 & 0 & \ldots & * & 0 \\
				0 & 0 & 0 & \ldots & 0 & 0 \\
			\end{pmatrix}.
		\end{equation}
		Similarly to \cref{def::subalgebras_truncation}, we will use the following.
		\begin{defn}
			\label{def::borel_truncation}
			The \defterm{k-th truncated Borel subalgebra} ${}_k \b$ is the Borel subalgebra as in \eqref{eq::wonderbolic_borel} associated to the truncated pyramid ${}_k \pi$ from \cref{def::pyramid_truncation}. Alternatively, we denote it by $\b_{{}_k N}$, if the truncation is clear from the context.
		\end{defn}
		
		It turns out that $\omega$ is symplectic and both spaces are Lagrangian.
		\begin{prop}
			\label{prop::wonderbolic_r_matrix}
			The form $\omega$ is non-degenerate with inverse $r_{\w}=\mathbf{j}_c - \mathbf{j}_c^{21}$, where $\mathbf{j}_c^{21}$ is uniquely defined by 
			\begin{equation}
				\label{eq::subregular_r_matrix_conditions}
				\mathbf{j}_c^{21}(E_{i,j}^*) = \begin{cases}
					\delta_{i>2}E_{j,i-1},\ j=1,2 \\
					E_{j,i-1} - \mathbf{j}_c^{21}(E_{i-1,j-1}^*),\ j\geq 3
				\end{cases}
			\end{equation}
			for any $E_{i,j}^* \in \m^*$, where $E_{i,j}^*$ is the dual basis and we consider $\mathbf{j}_c^{21}$ as a map $\m^* \rightarrow \b$.
			\begin{proof}
				Observe that both $\m$ and $\b$ are isotropic subspaces with respect to $\omega$. Since the latter is skew-symmetric, it is enough to construct an inverse $-\mathbf{j}_c^{21}$ only of one map, say $\omega \colon \b \rightarrow \m$. Observe that
				\[
				\omega(E_{j,i-1}) = \begin{cases}
					-\delta_{i>2}E_{i,j}^*,\ j=1,2 \\
					-E_{i,j}^* + E_{i-1,j-1}^*,\ j \geq 3
				\end{cases}
				\]
				for $i \geq j+1$. Then \eqref{eq::subregular_r_matrix_conditions} follows. Note that these equations allow to construct $\mathbf{j}_c^{21}$ inductively, starting from $j=1$ and $j=2$. In particular, they define the inverse. 
			\end{proof}
		\end{prop}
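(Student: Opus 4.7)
The strategy is to decompose the wonderbolic subspace as $\w = \m \oplus \b$, verify that both summands are $\omega$-isotropic, and then construct the inverse $\mathbf{j}_c^{21} \colon \m^* \to \b$ explicitly by induction on the column index.

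First, I would check the direct sum decomposition $\w = \m \oplus \b$ entry by entry using \eqref{eq::subregular_nilpotent_subalgebra} and \eqref{eq::wonderbolic_borel}, and then show that both summands are isotropic for $\omega$. Both claims reduce to the observation $\Tr(e \cdot E_{a,b}) = \delta_{b, a-1}$ (valid when $2 \leq a-1 \leq N-1$), supported only on the subdiagonal positions $\{(a, a-1) : 3 \leq a \leq N\}$, and neither $[\m, \m]$ nor $[\b, \b]$ hits any of these entries. Since $\omega$ is skew-symmetric and $\m, \b$ are complementary isotropic subspaces of equal dimension, non-degeneracy of $\omega$ is equivalent to the induced map $\omega \colon \b \to \m^*$ being an isomorphism, and by skew-symmetry it is enough to construct a one-sided inverse $\m^* \to \b$.

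The main calculation is to compute $\omega(E_{j,i-1}, -) \in \m^*$ for each basis element $E_{j,i-1} \in \b$ (indices constrained by $1 \leq j \leq i-1$, $3 \leq i \leq N$). Using $[E_{j,i-1}, E_{i',j'}] = \delta_{i-1, i'} E_{j, j'} - \delta_{j, j'} E_{i', i-1}$ together with the trace formula above, the result splits into two cases. For $j \in \{1, 2\}$ the first term pairs against $e$ trivially (since $\Tr(e \cdot E_{j, j'})$ requires $j \geq 3$), yielding $\omega(E_{j, i-1}) = -E_{i,j}^*$, with the prefactor $\delta_{i > 2}$ encoding the fact that $E_{i,j} \in \m$ requires $i \geq 3$. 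For $j \geq 3$ both terms survive, yielding $\omega(E_{j, i-1}) = E_{i-1, j-1}^* - E_{i,j}^*$. These are precisely the defining equations \eqref{eq::subregular_r_matrix_conditions}, up to the sign convention that identifies $\omega|_\b^{-1}$ with $-\mathbf{j}_c^{21}$.

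Finally, I would invert these relations by induction on $j$. The base cases $j = 1, 2$ read off directly as $\mathbf{j}_c^{21}(E_{i,j}^*) = E_{j, i-1}$. For $j \geq 3$ the two-term equation rearranges to $\mathbf{j}_c^{21}(E_{i,j}^*) = E_{j, i-1} - \mathbf{j}_c^{21}(E_{i-1, j-1}^*)$, and since the shifted argument has strictly smaller column index $j - 1$, the inductive hypothesis defines it and the recursion terminates at $j \in \{1, 2\}$. Combined with $\dim \m = \dim \b = \frac{1}{2} \dim \w$, the existence of this inverse shows that $\omega$ is non-degenerate and that both $\m$ and $\b$ are Lagrangian. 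The main obstacle is not conceptual but organizational: tracking the index shifts through the recursion and verifying that every admissible pair $(i, j)$ is hit exactly once, so that $\mathbf{j}_c^{21}$ is well-defined as a map on all of $\m^*$.
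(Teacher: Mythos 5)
Your proposal is correct and follows essentially the same route as the paper's proof: use the decomposition of $\w$ into the complementary isotropic subspaces $\m$ and $\b$, reduce by skew-symmetry to inverting $\omega\colon\b\to\m^*$, compute $\omega(E_{j,i-1})$ on the basis, and invert by induction on the column index $j$. You spell out a few steps the paper leaves implicit (the trace identity $\Tr(e\cdot E_{a,b})=\delta_{b,a-1}$, the explicit commutator, and the dimension count), but the argument is the same.
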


		\begin{remark}
			The subregular wonderbolic subspace is an analog of the \emph{mirabolic subalgebra} in the case of the regular nilpotent element in the same way $r_{\w}$ is an analog of the rational Cremmer-Gervais $r$-matrix, see \cite{KalmykovYangians}. One main difference is that it is not a subalgebra, thus $r_{\cW}$ does not satisfy the classical Yang-Baxter equation. However, it turns out $\mathbf{j}_c$ is the constant part of the semi-classical limit of the tensor structure on Whittaker vectors. As the reader will see in \cref{sect::tensor}, in addition to the constant part $r_{\w}$, the semi-classical limit of the tensor structure also involves certain ``dynamical'' parameters lying on the subalgebra spanned by $\{E_{11},E_{21}\}$.
		\end{remark}

		\subsection{Whittaker vectors: general setup} 
		\label{subsect::whittaker_vectors_general_setup}
		In this subsection, we show that the Drinfeld-Sokolov reduction functor \eqref{eq::ds_reduction_funtor_rep_g} admits a canonical ``trivialization.''
		
		Recall the elements $T_{ij;x}^{(r)}$ from \eqref{eq::bk_t_elements} and their truncated analogs ${}_k T_{ij;x}^{(r)}$ from \eqref{eq::bk_t_elements_truncated}. As we mentioned in \cref{section:pyramid}, the authors of \cite{BrundanKleshchev} considered the case $\hbar=1$, however, all the proofs can be translated \emph{mutatis mutandis} to their $\hbar$-versions and will not be mentioned explicitly here.
		
		Recall also that a W-algebra is defined as the quantum Hamiltonian reduction $(\U(\g)/\m^{\psi})^{\m^{\psi}}$. Since $T_{ij;x}^{(r)} \in \U(\p)$ by construction, we may treat them as elements in the quotient $\U(\g)/\m^{\psi}$. By combination of a particular case of the fundamental result \cite[Theorem 10.1]{BrundanKleshchev}, identifying W-algebras with \emph{truncated shifted Yangians}, and \cite[Corollary 6.3]{BrundanKleshchev}, subregular W-algebras admit an explicit presentation.
		\begin{thm}
			\label{thm::subregular_walgebra_basis}
			The monomials in the elements
			\begin{equation*}
				T_{11;0}^{(1)}, \qquad T_{21;1}^{(1)}, \qquad T_{12;1}^{(N-1)}, \qquad \{T_{22;1}^{(r)} \}_{1 \leq r \leq N-1},
			\end{equation*}
			taken in any fixed order, form a basis of the subregular W-algebra $\cW$.
		\end{thm}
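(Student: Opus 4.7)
The plan is to derive the basis by specializing the PBW-type structure theorem of Brundan-Kleshchev to the subregular pyramid \eqref{eq::subregular_pyramid}. The argument proceeds in three steps and essentially amounts to unpacking the two cited results of \cite{BrundanKleshchev}.

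First, I would observe that the subregular pyramid has maximal height $n = 2$, so the isomorphism of \cite[Theorem 10.1]{BrundanKleshchev} identifies $\cW$ with a truncated shifted Yangian for $\gl_2$, where the shift matrix and truncation level are read off from the column heights $(2, 1, \ldots, 1)$ and row lengths $(1, N-1)$ of the pyramid. Under this identification, the elements $T_{ij;x}^{(r)}$ of \cref{def:t_els} with $i, j \in \{1, 2\}$ correspond to the standard PBW generators of the shifted Yangian.

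Second, I would invoke \cite[Corollary 6.3]{BrundanKleshchev}, which provides an explicit PBW basis of the truncated shifted Yangian in terms of ordered monomials in prescribed $T_{ij;x}^{(r)}$ generators. Specializing the index ranges to our pyramid yields exactly the four families in the statement: the three singletons $T_{11;0}^{(1)}$, $T_{21;1}^{(1)}$, and $T_{12;1}^{(N-1)}$ arise because row $1$ of the pyramid has length $1$, which forces a single allowed value of $r$ in each case via the shift matrix, while the family $T_{22;1}^{(r)}$ for $1 \leq r \leq N-1$ reflects that row $2$ has length $N-1$. A consistency check is that this gives $N+2$ generators, matching the dimension of the Slodowy slice for the subregular orbit (equal to $\dim \gl_N - \dim \cO_{\text{subreg}} = N^2 - (N^2 - N - 2) = N+2$). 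Finally, I would transfer from the $\hbar = 1$ version of Brundan-Kleshchev to the asymptotic setting: by construction (condition (4) in \cref{def:t_els}), each $T_{ij;x}^{(r)}$ has Kazhdan degree exactly $r$, so the ordered-monomial basis is compatible with the Kazhdan filtration from \cref{def::kazhdan_filtration}, and the Rees construction lifts the basis to $\cW$ verbatim.

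The main obstacle is the bookkeeping in matching the conventions: \cite{BrundanKleshchev} typically phrases its PBW theorem using the Drinfeld-type $D$-, $E$-, $F$-generators of the shifted Yangian, so one must trace their correspondence with the $T_{ij;x}^{(r)}$ presentation and verify that the degree ranges surviving the truncation for our specific pyramid are precisely those listed. This amounts to unpacking the shift data of the subregular pyramid, which is essentially a translation exercise but is where the concrete indices $r = 1$ and $r = N-1$ of the statement emerge naturally.
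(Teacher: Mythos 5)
Your proposal is correct and takes essentially the same route as the paper, whose proof consists precisely of invoking \cite[Theorem 10.1]{BrundanKleshchev} (identification with a truncated shifted Yangian) together with \cite[Corollary 6.3]{BrundanKleshchev} (the PBW basis) and specializing to the subregular pyramid; the passage to the $\hbar$-version via the Rees construction is likewise noted in the paper. Your added consistency check that the generator count $N+2$ agrees with $\dim\gl_N - \dim\cO_{\mathrm{subreg}}$, and the explicit reading of the index ranges off the shift matrix and row lengths $(1, N-1)$, are not spelled out in the paper but are exactly the right way to flesh out the citation.
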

		
		Observe that $T_{11;0}^{(1)} = E_{1,1} - (N-2) \hbar$ and $T_{21;1}^{(1)}=-E_{2,1}$. We will consider the following subalgebra of $\p$:
		\begin{equation}
			\label{eq::l_subalgebra}
			\l := \Span(E_{2,1},E_{1,1}).
		\end{equation}
		
		For any left $\U(\g)$-module $X$, denote by $\b \backslash X := \C \otimes_{\U(\b)} \U(\g)$, where $\b$ acts on $\C$ trivially. Consider the composition
		\begin{equation}
			\label{eq::subregular_walgebra_quotient}
			\cW = (\U(\g)/\m^{\psi})^{\m^{\psi}} \hookrightarrow \U(\g)/\m^{\psi} \rightarrow \b \backslash \U(\g) / \m^{\psi}.
		\end{equation}
		\begin{prop}
			\label{prop::subregular_walgebra_quotient}
			The map \eqref{eq::subregular_walgebra_quotient} is an isomorphism of right $\cW$-modules.
			\begin{proof}
				Consider the map between the associated graded spaces with respect to the filtration induced from the Kazhdan grading on both sides (recall that $\hbar \in \C^*$). It is clear from the formula \eqref{eq::bk_t_elements} that it sends
				\[
				T_{11;0}^{(1)} \mapsto E_{1,1}, \qquad T_{21;1}^{(1)} \mapsto E_{2,1}, \qquad T_{22;1}^{(N-1)} \mapsto E_{N,N}.
				\]
				It is also clear that it sends 
				\[
				T_{22;1}^{(r)} \mapsto E_{r+1,N} + x,\ 1\leq r < N-1,
				\]
				where $x$ is expressible in terms of $E_{11},E_{21},E_{s+1,N}$ for $s>r$. Likewise, 
				\[
				T_{12;1}^{(N-1)} = E_{1,N} + x,
				\]
				where $x$ is expressible in terms of $E_{1,1},E_{2,1},E_{s+1,N}$ for $s\geq 1$. In particular, we see that this map sends generators to generators. Since this is an algebra homomorphism, we conclude by \cref{thm::subregular_walgebra_basis} that it is an isomorphism. In particular, the map \eqref{eq::subregular_walgebra_quotient} is an isomorphism as well.
			\end{proof}
		\end{prop}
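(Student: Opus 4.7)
The plan is to check the map is a right $\cW$-module homomorphism tautologically and then show bijectivity by passing to associated gradeds with respect to the Kazhdan filtration. The right $\cW$-action on $\b\backslash \U(\g)/\m^{\psi}$ is induced from the right regular action on $\U(\g)$ (which descends because both $\b$-invariants on the left and $\m^{\psi}$-invariants on the right are preserved under right multiplication by elements of $\cW\subset \U(\p)^{\m^{\psi}}$ — note the right $\cW$-action is the usual one from $\U(\g)$, not the bimodule adjoint action). Compatibility with the inclusion $\cW \hookrightarrow \U(\g)/\m^{\psi}$ is then immediate.

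For bijectivity, I would work on associated gradeds. The Kazhdan filtration descends to both sides. By PBW and the fact that $\m^{\psi}\subset \U(\g)$ generates a free left ideal on $\U(\m)$ modulo constants, $\mathrm{gr}(\b\backslash \U(\g)/\m^{\psi})$ is isomorphic as a $\C$-algebra to the symmetric algebra on a complement of $\b + \m$ inside $\gl_N$. A direct inspection of \eqref{eq::subregular_nilpotent_subalgebra} and \eqref{eq::wonderbolic_borel} identifies such a complement as
\[
\C\{E_{1,1},\; E_{2,1},\; E_{1,N},\; E_{2,N},\; E_{3,N},\; \ldots,\; E_{N,N}\},
\]
so $\mathrm{gr}(\b\backslash \U(\g)/\m^{\psi})$ is a polynomial ring in $N+2$ variables. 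On the other hand, \cref{thm::subregular_walgebra_basis} says $\mathrm{gr}(\cW)$ is a polynomial ring in the $N+2$ symbols
\[
\mathrm{gr}\,T_{11;0}^{(1)},\quad \mathrm{gr}\,T_{21;1}^{(1)},\quad \mathrm{gr}\,T_{12;1}^{(N-1)},\quad \mathrm{gr}\,T_{22;1}^{(r)}\ (1\leq r\leq N-1).
\]
Thus both sides have the same Hilbert series, and it suffices to show the induced graded map is surjective — equivalently, hits a system of polynomial generators.

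The main content is therefore the computation of the images on associated gradeds, which I would extract directly from formula \eqref{eq::bk_t_elements}. The two ``small'' generators $T_{11;0}^{(1)}$ and $T_{21;1}^{(1)}$ clearly map to $E_{1,1}$ and $-E_{2,1}$ respectively (up to constants). For the family $T_{22;1}^{(r)}$, the condition $\mathrm{row}(i_1)=\mathrm{row}(j_s)=2$ together with the degree constraint \eqref{item::degree} forces, among the length-one contributions, the term $\widetilde{E}_{r+1,N}$ (reading row indices via the pyramid \eqref{eq::subregular_pyramid}), so the image has leading piece $\pm E_{r+1,N}$ modulo elements expressible in $E_{1,1},E_{2,1}$ and $E_{s+1,N}$ for $s>r$. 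Similarly $T_{12;1}^{(N-1)}$ gives $\pm E_{1,N}$ plus lower terms in the same sense. Ordering the target generators as $E_{1,1},E_{2,1},E_{N,N},E_{N-1,N},\ldots,E_{2,N},E_{1,N}$, the map is then \emph{lower-triangular with nonzero diagonal} and hence surjective on gradeds, concluding the proof.

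The main obstacle is setting up the triangular ordering so that the ``lower order'' terms $x$ appearing in $T_{22;1}^{(r)}\mapsto E_{r+1,N}+x$ and $T_{12;1}^{(N-1)}\mapsto E_{1,N}+x$ really involve only strictly earlier generators in the chosen ordering. This requires a careful reading of conditions \eqref{item::first}--\eqref{item::sixth} to rule out contributions by $E_{j,N}$ with $j\leq r$ when computing $T_{22;1}^{(r)}$ at the associated graded level; the row and column conditions in \cref{def:t_els} are precisely designed so that this triangularity holds, but bookkeeping the sign patterns and tuples is where the bulk of the verification lies.
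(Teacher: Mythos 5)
Your proof is correct and follows essentially the same route as the paper: pass to Kazhdan associated gradeds, identify $\mathrm{gr}(\b\backslash\U(\g)/\m^{\psi})$ with the symmetric algebra on a complement of $\b\oplus\m$ in $\gl_N$, and show the graded map is triangular on the explicit $T$-generators of $\cW$ from \cref{thm::subregular_walgebra_basis}. The Hilbert-series count and the explicit identification of the $N+2$-dimensional complement $\C\{E_{1,1},E_{2,1},E_{1,N},\ldots,E_{N,N}\}$ are useful elaborations of steps the paper leaves implicit, but the core argument — triangularity of the graded map with leading terms $E_{1,1},E_{2,1},E_{r+1,N},E_{1,N}$ — is the same.
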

		
		Recall the setting of \cref{sect::ds_reduction}.
		\begin{cor}
			\label{cor::subregular_res_quotient}
			For any Harish-Chandra bimodule $X$, there is a natural isomorphism of right $\cW$-modules
			\[
			\res^{\psi}(X) = (X/\m^{\psi})^{\m^{\psi}} \xrightarrow{\sim} \b \backslash X /\m^{\psi}.
			\]
			\begin{proof}
				Recall that by Skryabin's theorem, the natural action map
				\[
				\U(\g)/\m^{\psi} \otimes_{\cW} (X/\m^{\psi})^{\m^{\psi}} \rightarrow X /\m^{\psi}
				\]
				is an isomorphism. Therefore, by \cref{prop::subregular_walgebra_quotient} we have
				\[
				\b \backslash X / \m^{\psi} \cong \b \backslash \U(\g) /\m^{\psi} \otimes_{\cW} (X/\m^{\psi})^{\m^{\psi}} \cong \cW \otimes_{\cW} (X/\m^{\psi})^{\m^{\psi}} = (X/\m^{\psi})^{\m^{\psi}}
				\]
				as required.
			\end{proof}
		\end{cor}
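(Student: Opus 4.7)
The plan is to bootstrap from Skryabin's equivalence and \cref{prop::subregular_walgebra_quotient}: the former gives a functorial reconstruction of any Whittaker module from its Whittaker vectors, while the latter gives the specific statement for the universal case $X = \U(\g)$ that coinvariants for $\b$ on the left recover the W-algebra on the nose. Combining the two should transport the general $(-)^{\m^{\psi}}$ computation to the much simpler-looking $\b \backslash (-)/\m^{\psi}$.

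Concretely, I would begin with the expression $\b \backslash X/\m^{\psi}$ and apply Skryabin's theorem to the Whittaker module $X/\m^{\psi}$, yielding a natural isomorphism
\[
X/\m^{\psi} \;\cong\; \U(\g)/\m^{\psi} \otimes_{\cW} (X/\m^{\psi})^{\m^{\psi}}
\]
of left $\U(\g)$-modules. Then I would take $\b$-coinvariants on both sides. Since $\b$ acts only through the leftmost factor $\U(\g)/\m^{\psi}$ while $\cW$ acts on the right of the same factor, these two actions commute and the functor $\b\backslash(-)$ commutes with $-\otimes_{\cW}(X/\m^{\psi})^{\m^{\psi}}$, giving
\[
\b\backslash X/\m^{\psi} \;\cong\; \bigl(\b\backslash \U(\g)/\m^{\psi}\bigr) \otimes_{\cW} (X/\m^{\psi})^{\m^{\psi}}.
\]
Next, I invoke \cref{prop::subregular_walgebra_quotient} to identify $\b \backslash \U(\g)/\m^{\psi} \cong \cW$ as right $\cW$-modules, which collapses the tensor product to $(X/\m^{\psi})^{\m^{\psi}} = \res^{\psi}(X)$. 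Chasing the construction, the resulting composite isomorphism is simply the map induced by the inclusion of Whittaker invariants into $X/\m^{\psi}$ followed by the projection to $\b$-coinvariants, which is manifestly natural in $X$ and $\cW$-linear on the right.

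The only step requiring any care is verifying that $\b\backslash(-)$ genuinely commutes with $-\otimes_{\cW}-$ for this particular tensor: this is a formal consequence of the fact that $\b$ acts on $\U(\g)/\m^{\psi}$ on the left while $\cW$ acts on the right, so the two operations tensor independently, but one should keep track that $\b \backslash \U(\g)/\m^{\psi}$ inherits its right $\cW$-module structure from \cref{prop::subregular_walgebra_quotient}. I do not anticipate a genuine obstacle here; the entire argument is essentially a functorial repackaging of \cref{prop::subregular_walgebra_quotient}, whose content is the nontrivial ingredient.
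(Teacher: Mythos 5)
Your argument is exactly the paper's: apply Skryabin's theorem to write $X/\m^{\psi} \cong \U(\g)/\m^{\psi} \otimes_{\cW} (X/\m^{\psi})^{\m^{\psi}}$, pass to $\b$-coinvariants using that $\b$ acts only through the left factor, and then invoke \cref{prop::subregular_walgebra_quotient} to collapse $\b \backslash \U(\g)/\m^{\psi}$ to $\cW$. You also make explicit the commutation of $\b\backslash(-)$ with $-\otimes_{\cW}-$, which the paper leaves implicit; this is a welcome clarification but not a different route.
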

		
		In particular, it implies that we can ``trivialize'' the Drinfeld-Sokolov reduction \eqref{eq::ds_reduction_funtor_rep_g} on free Harish-Chandra bimodules.
		\begin{prop}
			\label{cor::ds_trivialization}
			For any $V\in \Rep(G)$, there is a natural isomorphism of right $\cW$-modules
			\begin{equation}
				\label{eq::categorical_trivialization}
				\triv_V \colon V \otimes \cW \xrightarrow{\sim} \b \backslash \U(\g) \otimes V /\m^{\psi},
			\end{equation}
			i.e. for every $v\in V$, there exists a unique Whittaker vector $v^{\psi}$ of the form
			\begin{equation}
				\label{eq::whittaker_vector_canonical_form}
				v^{\psi} = 1 \otimes v + \sum x_i \otimes v_i,\ x_i \in \b \cdot \U(\g).
			\end{equation}
			In particular, together with the isomorphism
			\[
			\b \backslash \U(\g) / \m^{\psi} \rightarrow \res^{\psi}(\U(\g) \otimes V),
			\]
			we have a commutative diagram
			\[
			\xymatrix@C+1.5pc{
				\Rep(G) \ar[dr]_{\free_{\cW}} \ar[r]^-{\res^{\psi} \circ \free } & \BiMod{\cW}{\cW} \ar[d]^{\forget} \\ & \RMod_{\cW}
			}
			\]
			where $\RMod_{\cW}$ is the category of right $\cW$-modules, the vertical arrow is the forgetful functor, and
			\[
			\free_{\cW} \colon \Rep(G) \rightarrow \RMod_{\cW}, \qquad V \mapsto V \otimes \cW
			\]
			is the functor of free right $\cW$-modules.
			\begin{proof}
				Follows from the PBW theorem and \cref{prop::subregular_walgebra_quotient}
			\end{proof}
		\end{prop}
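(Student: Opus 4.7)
The strategy is to specialize Corollary~\ref{cor::subregular_res_quotient} to the free Harish-Chandra bimodule $X = \free(V)$ and then match the resulting right $\cW$-module with $V \otimes \cW$ via PBW.

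First, I would apply Corollary~\ref{cor::subregular_res_quotient} to $X = \free(V) = \U(\g)\otimes V$, producing a natural isomorphism of right $\cW$-modules
\[
\res^\psi(\free(V)) \xrightarrow{\sim} \b\backslash\free(V)/\m^\psi.
\]
For each $v\in V$, the class $[1\otimes v]$ on the right is the image of a unique Whittaker vector $v^\psi\in(\free(V)/\m^\psi)^{\m^\psi}$. Since $v^\psi$ and $1\otimes v$ project to the same class modulo the left $\b$-ideal, their difference takes the form $\sum x_i\otimes v_i$ with $x_i\in\b\cdot\U(\g)$, yielding the canonical form \eqref{eq::whittaker_vector_canonical_form}.

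Next, I would invoke the PBW theorem for the vector-space decomposition
\[
\gl_N = \m \oplus \b \oplus \l \oplus \langle E_{1,N},\ldots,E_{N,N}\rangle.
\]
This identifies $\free(V)/\m^\psi \cong \U(\p)\otimes V$, and then $\b\backslash\free(V)/\m^\psi \cong \U(\l\oplus\langle E_{i,N}\rangle_i)\otimes V$. Combined with Proposition~\ref{prop::subregular_walgebra_quotient}, which identifies $\cW$ with $\U(\l\oplus\langle E_{i,N}\rangle_i)$ via the composition $\cW\hookrightarrow\U(\g)/\m^\psi\twoheadrightarrow\b\backslash\U(\g)/\m^\psi$, this furnishes the vector-space isomorphism $\triv_V \colon V\otimes\cW\xrightarrow{\sim}\b\backslash\free(V)/\m^\psi$, sending $v\otimes w$ to the class of $\tilde w\otimes v$ for the unique lift $\tilde w\in\U(\p)$ of $w$.

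Finally, I would upgrade $\triv_V$ to a right $\cW$-module isomorphism. The right $\cW$-action on $\b\backslash\free(V)/\m^\psi$ is induced from the bimodule structure on $\free(V)$, and descends through the left $\b$-quotient because left and right actions on a bimodule commute. Under the explicit tensor structure of \eqref{eq::explicit_tensor_structure}, the Whittaker vector $v^\psi\cdot w$ represents the class of $\tilde w\otimes v$, matching plain right multiplication on the $\cW$-factor of $V\otimes\cW$. The commutative diagram is then a direct consequence.

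The main obstacle is the third step: the right $\cW$-action on $\free(V)$ is given by the twisted bimodule formula $(u\otimes v)\cdot\xi = u\xi\otimes v - \hbar\, u\otimes\xi v$, and one must verify that after the left $\b$-quotient it agrees with plain right multiplication on the $\cW$-factor under the PBW identification. Conceptually this follows from the monoidal structure of $\res^\psi$ (Theorem~\ref{thm::ds_reduction_monoidal}) and naturality, but it does require bookkeeping of the twist terms against the left $\b$-quotient.
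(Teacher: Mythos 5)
Your steps 1 and 2 are sound (applying \cref{cor::subregular_res_quotient} to $\free(V)$, and the PBW decomposition $\gl_N = \m \oplus \b \oplus \l \oplus \langle E_{1,N},\ldots,E_{N,N}\rangle$), but the key claim in step 3 is false, so there is a genuine gap. The linear map you define in step 2, $v \otimes w \mapsto [\tilde w \otimes v]$ with $\tilde w \in \U(\p)$ the PBW lift of $w$, is \emph{not} a map of right $\cW$-modules, and correspondingly $v^\psi \cdot w$ does \emph{not} represent the class of $\tilde w \otimes v$. Concretely, $v_N^\psi = 1 \otimes v_N$, and for any $\xi \in \p$ with $\xi v_N \neq 0$ (e.g.\ $\xi = E_{N,N}$, the PBW-leading term of $T_{22;1}^{(N-1)} \in \cW$) one has $(1 \otimes v_N)\xi = \xi \otimes v_N - \hbar \otimes \xi v_N$; the twist term $-\hbar \otimes \xi v_N$ lives in $\C \otimes V \subset \U(\p)\otimes V$ and therefore does \emph{not} lie in the left $\b$-ideal $\b\cdot\U(\p) \otimes V$, so it survives in $\b\backslash\free(V)/\m^\psi$. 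The ``bookkeeping of the twist terms'' you defer would expose this mismatch rather than resolve it; appealing to naturality and \cref{thm::ds_reduction_monoidal} does not make a non-equivariant map equivariant.

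The repair is to reverse the logic: define $\triv_V$ as the map that is $\cW$-linear by construction and then check bijectivity, which is exactly where the PBW theorem and \cref{prop::subregular_walgebra_quotient} enter (as the paper's one-line proof indicates). Set $\triv_V(v \otimes w) := v^\psi \cdot w$, where $v^\psi$ is the unique canonical Whittaker vector from your step 1; this is manifestly a morphism of right $\cW$-modules, and under $(\free(V)/\m^\psi)^{\m^\psi} \cong \b\backslash\free(V)/\m^\psi$ it is given by $v \otimes w \mapsto [(1\otimes v)\tilde w]$, since all the correction terms $x_i \otimes v_i$ in $v^\psi$ have $x_i \in \b\cdot\U(\g)$ and die in the $\b$-quotient. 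To see this map is an isomorphism, filter both sides by PBW degree (placing $V$ in degree $0$); the twist term $-\hbar\, u \otimes \xi v$ has strictly smaller PBW degree than $u\xi \otimes v$, so on the associated graded the map degenerates to $v \otimes \bar w \mapsto [\bar w \otimes v]$, which is the isomorphism $V \otimes \Sym(\l') \xrightarrow{\sim} \Sym(\l') \otimes V$ furnished by the PBW theorem and the associated-graded version of \cref{prop::subregular_walgebra_quotient}. A filtered map with invertible associated graded is invertible, completing the proof.
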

		
		Observe that $\RMod_{\cW}$ is naturally a right module category over $\BiMod{\cW}{\cW}$, see \cref{def::module_categories}. Since $\res^{\psi}$ is a monoidal functor, it becomes a right module category over $\Rep(G)$ as well. Likewise, the category $\Rep(G)$ is tautologically a right module category over itself. Also, using Skryabin's theorem, we obtain a natural isomorphism
		\[
		X/\m^{\psi} \otimes_{\cW} (Y/\m^{\psi})^{\m^{\psi}} \cong X \otimes_{\U(\g)} \U(\g)/\m^{\psi} \otimes_{\cW} (Y/\m^{\psi})^{\m^{\psi}} \xrightarrow{\sim} X \otimes_{\U(\g)} Y/\m^{\psi}.
		\]
		In particular, for every $U,V \in \Rep(G)$, we have
		\begin{align*}
			\free_{\cW}(U) \otimes_{\cW} \res^{\psi}(\U(\g) \otimes V) \xrightarrow{\triv_U \otimes \id} \b \backslash (\U(\g) \otimes U) /\m^{\psi} \otimes_{\cW} (\U(\g) \otimes V/\m^{\psi})^{\m^{\psi}} \cong \\
			\cong \b \backslash \U(\g) \otimes U \otimes V/\m^{\psi} \xrightarrow{\triv_{U\otimes V}^{-1}} U \otimes V \otimes \cW.
		\end{align*}
		canonically. At the same time, since $\free_{\cW}(U)$ is a free $\cW$-module, we also have a canonical isomorphism
		\[
		\free_{\cW}(U) \otimes_{\cW} \res^{\psi}(\U(\g) \otimes V) = (U \otimes \cW) \otimes_{\cW} \res^{\psi}(\U(\g) \otimes V) \cong U \otimes V \otimes \cW 
		\]
		of right $\cW$-modules. Combining it with \cref{cor::ds_trivialization} and \cref{thm::ds_reduction_monoidal}, we get a ``matrix'' form of the monoidal structure on the Drinfeld-Sokolov reduction.
		\begin{thm}
			\label{thm::ds_reduction_tensor_structure_trivialization}
			The functor $\free_{\cW}\colon \Rep(G) \rightarrow \RMod_{\cW}$ is a functor of right $\Rep(G)$-module categories in the sense of \cref{def::module_categories_functor}. In particular, there is a collection of natural isomorphisms
			\[
			J_{UV}\colon U \otimes V \otimes \cW \rightarrow U \otimes V \otimes \cW,
			\]
			for all $U,V \in \Rep(G)$.
		\end{thm}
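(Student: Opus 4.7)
The statement essentially compiles what the paragraph preceding it already lays out; my plan is to verify the two axioms of \cref{def::module_categories_functor} for the composite isomorphism defined there.

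I would first record the right $\Rep(G)$-module structure on $\RMod_{\cW}$. The category $\RMod_{\cW}$ is tautologically a right module category over $\BiMod{\cW}{\cW}$ via $(M,Y) \mapsto M \otimes_{\cW} Y$. Combining monoidality of the free functor $\free$ with \cref{thm::ds_reduction_monoidal}, the composite $\res^{\psi} \circ \free\colon \Rep(G) \to \BiMod{\cW}{\cW}$ is monoidal, so it transports the $\BiMod{\cW}{\cW}$-action on $\RMod_{\cW}$ to a right $\Rep(G)$-action, given explicitly by $M \otimes V := M \otimes_{\cW} \res^{\psi}(\free(V))$. The category $\Rep(G)$ acts on itself by its own tensor product, so the statement of the theorem is well-posed.

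Next, I would define $J_{U,V}$ as the composition appearing in the display preceding the theorem: identify $\free_{\cW}(U \otimes V) = (U \otimes V) \otimes \cW$ via $\triv_{U \otimes V}$ with $\b \backslash (\U(\g) \otimes U \otimes V)/\m^{\psi}$; rewrite this via Skryabin's identification as $\b \backslash (\U(\g) \otimes U)/\m^{\psi} \otimes_{\cW} \res^{\psi}(\free(V))$; and apply $\triv_U^{-1} \otimes \id$ to land in $(U \otimes \cW) \otimes_{\cW} \res^{\psi}(\free(V))$, where both source and target are canonically identified with $U \otimes V \otimes \cW$. Naturality in $U$ and $V$ follows from naturality of $\triv$ (which is built from the PBW identification of \cref{prop::subregular_walgebra_quotient}) and of Skryabin's isomorphism.

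What remains is the verification of the two coherence axioms. The unit axiom is immediate from the fact that $\triv_{\bu}$ is the isomorphism $\cW \cong \b \backslash \U(\g)/\m^{\psi}$ of \cref{prop::subregular_walgebra_quotient}, which matches the unit constraint of the module structure on $\RMod_{\cW}$. For the pentagon axiom, one compares $J_{U, V \otimes W}$ with the composite obtained from $J_{U \otimes V, W}$ and $J_{U,V} \otimes \id_W$, expanding both as diagrams on the common object $\b \backslash (\U(\g) \otimes U \otimes V \otimes W)/\m^{\psi}$. Each cell of the resulting diagram is an instance of (i) naturality of $\triv$, (ii) associativity of Skryabin's identification for iterated tensor products over $\U(\g)$, or (iii) the explicit monoidality constraint \eqref{eq::explicit_tensor_structure} of $\res^{\psi}$. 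The main obstacle is purely organizational bookkeeping of three applications of $\triv$ together with two nested Skryabin identifications; a cleaner route exploits the uniqueness clause of \cref{cor::ds_trivialization}, since both composites must send any vector to a Whittaker vector of the canonical form \eqref{eq::whittaker_vector_canonical_form}, and such vectors are forced to agree.
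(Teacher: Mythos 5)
Your proposal is correct and takes essentially the same approach as the paper: define $J_{UV}$ by composing the trivialization $\triv_{U\otimes V}$, Skryabin's identification, and $\triv_U^{-1}\otimes\id$, and then deduce the module-functor axioms from naturality of $\triv$ and the monoidality of $\res^\psi$ established in \cref{thm::ds_reduction_monoidal}. The paper itself does not spell out the pentagon and unit verifications, treating them as immediate from the construction in the preceding paragraph; your observation that the coherence can be settled most cleanly via the uniqueness clause of \cref{cor::ds_trivialization} (both composites produce Whittaker vectors in the canonical form \eqref{eq::whittaker_vector_canonical_form}, hence must agree) is a valid and tidy way to close that small gap, and is faithful to the paper's intent.
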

		
		In what follows, we will compute its semi-classical limit.

		\subsection{Whittaker vectors: vector representation}
		\label{subsect::whittaker_vectors_standard_representation}
		
		We explicitly compute the generating Whittaker vectors for $\U(\g) \otimes \C^N/\m^{\psi}$, where 
		\[
		\C^N = \Span(v_i | 1 \leq i \leq N), \qquad \ad_{E_{ij}}(v_k) = \delta_{jk} v_i
		\]
		(we use the notation $\ad_{E_{ij}}$ from \cref{subsect::hc_bimodules}).
		
		Recall the truncated generators \eqref{eq::bk_t_elements_truncated}. The next proposition gives a relation between ${}_{k}T$ for different values of $k$.
		\begin{prop}\cite[Lemma 10.4]{BrundanKleshchev}
			\label{prop:prop:Trelation}
			Suppose that $r>0$. Then
			\begin{equation}
				\label{eq::t_relation}
				{}_1T_{i,2;1}^{(r)} = {}_2 T_{i,2;1}^{(r)} + {}_2 T_{i,2;1}^{(r-1)} \widetilde{E}_{N-1,N-1} + [{}_2 T_{i,2;1}^{(r-1)} , \widetilde{E}_{N-2,N-1}]
			\end{equation}
			for $i=1,2$, where $[,]$ refers to the adjoint action from \cref{rmk::whittaker_adjoint}.
		\end{prop}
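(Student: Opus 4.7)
My plan is to expand ${}_1 T_{i,2;1}^{(r)}$ using \cref{def:t_els} and partition the resulting sum of tuples $(i_1, j_1, \ldots, i_s, j_s)$ according to how the ``new'' index $N-1$---the unique block in ${}_1\pi$ absent from ${}_2\pi$---appears. The key structural fact is that $N-1$ sits in row $2$, column $N-2$, which is the maximum column of ${}_1\pi$. This forces $N-1$ to occur only at the terminal position: if $j_k = N-1$ for some $k<s$, condition~\eqref{item::fifth} (applicable because $\sigma_{\row(N-1)}=\sigma_2=+$) requires $\col(i_{k+1})>N-2$, impossible; and by condition~\eqref{item::bk_t_parabolic}, whenever $i_k = N-1$ one also has $j_k = N-1$.

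This yields a decomposition ${}_1 T_{i,2;1}^{(r)} = A + B + C$, where $A$ collects tuples avoiding $N-1$, $B$ those ending in $(i_s, j_s) = (N-1, N-1)$, and $C$ those ending in $(i_s, N-1)$ with $i_s \leq N-2$. Since conditions~\eqref{item::first}--\eqref{item::sixth} become identical to the ${}_2\pi$ conditions once all indices are bounded by $N-2$, one gets $A = {}_2 T_{i,2;1}^{(r)}$. For $B$, the trailing $\widetilde{E}_{N-1,N-1}$ carries Kazhdan degree~$1$ and the new sign $\sigma_{\row(j_{s-1})}=\sigma_2=+$ is trivial; factoring it off leaves a ${}_2\pi$-tuple of degree $r-1$, so $B = {}_2 T_{i,2;1}^{(r-1)} \widetilde{E}_{N-1,N-1}$.

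The main obstacle is identifying $C$ with $[{}_2 T_{i,2;1}^{(r-1)}, \widetilde{E}_{N-2,N-1}]$. I would expand the commutator by Leibniz applied to the monomial expansion of ${}_2 T_{i,2;1}^{(r-1)}$. A direct $\gl_N$ calculation using \eqref{eq::modified_generators} shows $[\widetilde{E}_{p,q}, \widetilde{E}_{N-2,N-1}] = \widetilde{E}_{p,N-1}$ when $q = N-2$ (including the diagonal case $p=q=N-2$) and vanishes otherwise, provided $p,q \leq N-2$. A second application of condition~\eqref{item::fifth} shows that $j_k = N-2$ can occur only at $k=s$ in a ${}_2\pi$-tuple, so only the terminal bracket survives: the commutator reduces to a sum over ${}_2\pi$-tuples with $j_s = N-2$, with $\widetilde{E}_{i_s,N-2}$ replaced by $\widetilde{E}_{i_s,N-1}$. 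Finally I would verify the bijection $\{{}_2\pi\text{-tuples with } j_s = N-2\} \leftrightarrow C$ obtained by shifting $\col(j_s)$ by one, which preserves all row and parabolic conditions on the preceding subsequence while raising the total Kazhdan degree by exactly one. The remaining labor is bookkeeping of the $(-1)^{\col(j)-\col(i)}$ prefactors hidden inside each $\widetilde{E}$.
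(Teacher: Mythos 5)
The paper does not prove this statement: it is cited verbatim from \cite[Lemma~10.4]{BrundanKleshchev} and used as a black box, so there is no ``paper proof'' to compare against. Your proposal is a direct combinatorial argument specialized to the subregular pyramid, and I believe it is correct. The two structural observations you lean on are exactly right: since block $N-1$ occupies the rightmost column of ${}_1\pi$ and $\sigma_{\row(N-1)}=\sigma_2=+$, condition~\eqref{item::fifth} rules out $j_k=N-1$ for $k<s$, and condition~\eqref{item::bk_t_parabolic} forces $i_k=N-1\Rightarrow j_k=N-1$; hence the new index can appear only in the last factor, giving the clean trisection $A+B+C$. The identifications $A={}_2T^{(r)}_{i,2;1}$ and $B={}_2T^{(r-1)}_{i,2;1}\widetilde{E}_{N-1,N-1}$ are immediate since the extra sign $\sigma_{\row(j_{s-1})}=\sigma_2=+$ is trivial and the Kazhdan degree of $\widetilde{E}_{N-1,N-1}$ is exactly $1$. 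For $C$, your computation $[\widetilde{E}_{p,q},\widetilde{E}_{N-2,N-1}]=\delta_{q,N-2}\widetilde{E}_{p,N-1}$ for $p,q\le N-2$ checks out, including the diagonal case where the $\hbar\rho_{\pi,\col(p)}$ shift drops out of the bracket, and the signs $(-1)^{\col(j)-\col(i)}$ match because $\col(N-1)=\col(N-2)+1$. The same condition-\eqref{item::fifth} argument (applied now inside ${}_2\pi$, where $N-2$ is the rightmost block) confines $j_m=N-2$ to $m=s'$, so Leibniz leaves only the terminal term, and your column-shift bijection between ${}_2\pi$-tuples ending in $j_{s'}=N-2$ of degree $r-1$ and ${}_1\pi$-tuples ending in $j_s=N-1$ of degree $r$ is well-defined both ways since the preceding indices, rows, and conditions~\eqref{item::first}--\eqref{item::sixth} are untouched. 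The only thing worth noting is that your argument is genuinely simpler than what a fully general Brundan--Kleshchev recursion would require, because in the subregular case every block above row~1 carries the sign~$+$; in particular the sign bookkeeping you deferred to the end is in fact almost entirely trivial here. Also worth a sentence in a polished writeup: the edge case $s=1$ in $B$ (whence the $T^{(0)}$ convention) and the empty case $C$ when $r=1$ are both consistent with your formulas.
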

		We will need the following lemma.
		\begin{lm}
			\label{lm:ecommute}
			For $i=1,2$, we have
			\begin{equation}
				[E_{N,N-1}, {}_1T_{i,2;1}^{(r)}] = {}_2 T_{i,2;1}^{(r-1)}.
			\end{equation}
			\begin{proof}
				By \cref{prop:prop:Trelation}, it suffices to compute
				\begin{equation}
					\label{eq::prop_big_commutator}
					\left[E_{N,N-1}, \text{ } {}_{2}T_{i2;1}^{(r)}+{}_{2}T_{i2;1}^{(r-1)}\tilde{E}_{N-1,N-1}+[{}_{2}T_{i2;1}^{(r-1)},\tilde{E}_{N-2,N-1}]\right].
				\end{equation}
				Since ${}_{2}T_{i,2;x}^{(s)} \in \gl_{{}_2 N}$ and ${}_2 N < N-1$, we have $[E_{N,N-1},{}_{2}T_{i,2;x}^{(s)}]=0$ for any $s$. Thus, \eqref{eq::prop_big_commutator} becomes $$\left[E_{N,N-1}, \text{ } {}_{2}T_{i,2;1}^{(r-1)}\widetilde{E}_{N-1,N-1}+[{}_{2}T_{i,2;1}^{(r-1)},\widetilde{E}_{N-2,N-1}]\right]={}_{2}T_{i,2;1}^{(r-1)}+\left[E_{N,N-1}, [{}_{2}T_{i,2;1}^{(r-1)},\widetilde{E}_{N-2,N-1}]\right]$$ since $\widetilde{E}_{N-1,N-1} = E_{N-1,N-1} + \hbar \cdot c$ for some constant $c$ by \eqref{eq::modified_generators}. From Jacobi's identity, we have $$\left[E_{N,N-1},[{}_{2}T_{i,2;1}^{(r-1)},\widetilde{E}_{N-2,N-1}]\right]=-\left[{}_{2}T_{i,2;1}^{(r-1)},[E_{N,N-1},\tilde{E}_{N-2,N-1}]\right]-\left[\tilde{E}_{N-2,N-1},[E_{N,N-1},{}_{2}T_{i,2;1}^{(r-1)}]\right] = 0,$$ which proves the proposition.
			\end{proof}
		\end{lm}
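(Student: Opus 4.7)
The plan is to substitute the decomposition from \cref{prop:prop:Trelation} into $[E_{N,N-1}, {}_1T_{i,2;1}^{(r)}]$ and simplify term by term, ultimately interpreting the result in the quotient $Q = \U(\g)/\m^{\psi}$ as in \cref{rmk::whittaker_adjoint}. The key structural observation I would use is that the truncated elements ${}_2T_{i,2;1}^{(s)}$ are polynomials in $\widetilde{E}_{a,b}$ with $a,b \leq N-2$: this follows from ${}_2 N = N-2$ together with the non-standard embedding \eqref{eq::nonstandard_embedding}. Since the Lie bracket $[E_{N,N-1}, E_{a,b}] = \delta_{N-1,a}E_{N,b} - \delta_{b,N}E_{a,N-1}$ vanishes for $a,b \leq N-2$, the derivation property of the commutator gives $[E_{N,N-1}, {}_2T_{i,2;1}^{(s)}] = 0$ for every $s$. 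This single fact will kill two of the three pieces.

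Expanding by \cref{prop:prop:Trelation}, the first summand $[E_{N,N-1}, {}_2T_{i,2;1}^{(r)}]$ vanishes immediately. For the middle summand, I would apply Leibniz:
\[
[E_{N,N-1}, {}_2T_{i,2;1}^{(r-1)}\widetilde{E}_{N-1,N-1}] = [E_{N,N-1}, {}_2T_{i,2;1}^{(r-1)}]\widetilde{E}_{N-1,N-1} + {}_2T_{i,2;1}^{(r-1)}[E_{N,N-1}, \widetilde{E}_{N-1,N-1}],
\]
and observe that the first term dies while the second reduces to ${}_2T_{i,2;1}^{(r-1)}\cdot E_{N,N-1}$: indeed, $\widetilde{E}_{N-1,N-1}$ differs from $E_{N-1,N-1}$ by an $\hbar$-scalar by \eqref{eq::modified_generators} which is killed by the commutator, and $[E_{N,N-1}, E_{N-1,N-1}] = E_{N,N-1}$. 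For the final summand I would apply the Jacobi identity to $[E_{N,N-1}, [{}_2T_{i,2;1}^{(r-1)}, \widetilde{E}_{N-2,N-1}]]$; both resulting double brackets vanish, using the observation above together with the direct check $[E_{N,N-1}, \widetilde{E}_{N-2,N-1}] = 0$.

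Combining yields $[E_{N,N-1}, {}_1T_{i,2;1}^{(r)}] = {}_2T_{i,2;1}^{(r-1)}\cdot E_{N,N-1}$ as an identity in $\U(\g)$. Since ${}_2T_{i,2;1}^{(r-1)} \in \U(\p)$ by \eqref{eq::quotient_identification} and $E_{N,N-1} \in \m$, this product is already in PBW order $\U(\p)\cdot\U(\m)$ and descends in $Q$ to $\psi(E_{N,N-1})\cdot {}_2T_{i,2;1}^{(r-1)}$; a short computation shows $\psi(E_{N,N-1}) = \Tr(eE_{N,N-1}) = 1$, as only the $E_{N-1,N}$ summand of $e$ contributes. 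The only mild obstacle is interpretational: the outer bracket $[E_{N,N-1},\,-\,]$ must be read as the adjoint action on $Q$ per \cref{rmk::whittaker_adjoint}, which is precisely what permits the trailing $E_{N,N-1}$ to be absorbed as the Whittaker value $1$; once this is in place the argument is a straightforward unwinding of \cref{prop:prop:Trelation}.
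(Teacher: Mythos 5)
Your argument is correct and essentially identical to the paper's: expand via \cref{prop:prop:Trelation}, kill the first summand because ${}_2T_{i,2;1}^{(s)}$ lives in $\gl_{{}_2 N}$ with ${}_2 N = N-2$, apply Leibniz to the middle summand, and apply Jacobi to the last. The one place you are more explicit than the paper is in tracking the trailing factor $E_{N,N-1}$ produced by the middle term and absorbing it as $\psi(E_{N,N-1})=1$ in the quotient $Q$, a step the paper leaves implicit when it writes the middle term as ${}_2T_{i,2;1}^{(r-1)}$.
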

		
		We go on to the main theorem.
		
		\begin{thm}
			\label{thm:whittaker_geq2}
			For $N-j \neq 1$, the following vectors in $\U(\g) \otimes \C^N/\m^{\psi}$ 
			\begin{equation}
				\label{eq::whittaker_vector_geq2}
				\tilde{v}_{N-j}^{\psi}=1 \otimes v_{N-j}+\sum_{i=0}^{j-1}(-1)^{j-i} {}_{i+1}T_{22;1}^{(j-i)} \otimes v_{N-i}
			\end{equation}
			are Whittaker.
			\begin{proof}
				We proceed with strong induction on the subregular pyramid $\pi$ from \eqref{eq::subregular_pyramid}. The base case is 
				\[
				\begin{picture}(30, 30)%
					\put(0,0){\line(1,0){30}}
					\put(0,15){\line(1,0){30}}
					\put(0,30){\line(1,0){30}}
					\put(0,0){\line(0,1){30}}
					\put(30,0){\line(0,1){30}}
					\put(15,23){\makebox(0,0){1}}
					\put(15,8){\makebox(0,0){2}}
					\put(-20,12){\makebox(0,0){$\pi =$}}
				\end{picture}
				\]
				and the corresponding nilpotent element with the nilpotent subalgebra are trivial. In particular, 
				\[1 \otimes v_2 \in \U(\gl_2) \otimes \C^2\]
				is automatically Whittaker.
				
				For the step, let assume that $N>2$ and we proved the statement for the truncated pyramid ${}_1 \pi$. It is clear that $1 \otimes v_N$ is Whittaker. Also, the vector
				\begin{equation}
					\label{eq::truncated_whittaker_vector}
					{}_1 \tilde{v}_{N-j}^{\psi} = 1 \otimes v_{N-j}  + \sum\limits_{i=1}^{j-1} (-1)^{j-i} {}_{i+1} T_{22;1}^{(j-i)} \otimes  v_{N-i} 
				\end{equation}
				is invariant under the truncated subalgebra ${}_1 \m$, recall \cref{def::subalgebras_truncation}. Indeed: while the coefficients of \eqref{eq::truncated_whittaker_vector} are different from the ones of \eqref{eq::whittaker_vector_geq2} for $\gl_{{}_1 N}$ by definition of the truncated generators \eqref{eq::bk_t_elements_truncated}, the non-standard embeddings $\U(\gl_{{}_k N}) \rightarrow \U(\gl_N)$ from \eqref{eq::nonstandard_embedding} are homomorphisms for all $1\leq k \leq N-1$, so the Whittaker property is preserved. Hence, let us rewrite equation \eqref{eq::whittaker_vector_geq2} in a recursive form:
				\begin{equation}
					\label{eq::thm_inductive_vector}
					\widetilde{v}_{N-j}^{\psi}={}_{1}\widetilde{v}_{N-j}^{\psi}+(-1)^{j}\cdot {}_{1}T_{22;1}^{(j)} \otimes v_{N}.
				\end{equation}
				To show that this vector is Whittaker, it suffices to check 
				\[
				(E_{N,N-1} - 1) \cdot \widetilde{v}_{N-j}^{\psi} = \hbar [E_{N,N-1},\widetilde{v}_{N-j}^{\psi}] = 0.
				\]
				(recall \cref{rmk::whittaker_adjoint}). Indeed,
				\begin{itemize}
					\item For all $x\in {}_1 \m$, we have 
					\[
					x^{\psi}\cdot {}_{1}\widetilde{v}_{N-j}^{\psi} = 0
					\]
					by induction hypothesis, and 
					\[
					x^{\psi}\cdot {}_{1}T_{22;1}^{(j)} \otimes v_{N} = 0
					\]
					by \cref{thm::subregular_walgebra_basis} and because any element of ${}_1\m$ commutes with $v_N$.
					\item For any $1 \leq k < N-1$, there exists $x\in {}_1 \m$, such that $E_{N,N-k} = [E_{N,N-1}^{\psi},x^{\psi}]$. Therefore, assuming we proved invariance under $E_{N,N-1}^{\psi}$, we have
					\[
					E_{N,N-k}^{\psi}\cdot \tilde{v}_{N-j}^{\psi} = \hbar^{-1}(E_{N,N-1}^{\psi} x^{\psi} - x^{\psi} E_{N,N-1}^{\psi}) \tilde{v}_{N-j}^{\psi} = 0.
					\]
				\end{itemize}

				By construction,
				\[
				{}_{1}\widetilde{v}_{N-j}^{\psi}={}_{2}\widetilde{v}_{N-j}^{\psi}+(-1)^{j-1}\cdot {}_{2}T_{22;1}^{(j-1)} \otimes v_{N-1}.
				\] 
				Since ${}_2 \tilde{v}_{N-j}^{\psi} \in \U(\gl_{N-2}) \otimes \C^{N-2}$, we have $[E_{N,N-1},{}_2 \tilde{v}_{N-j}^{\psi}] = 0$. Likewise,
				\[
				[E_{N,N-1},{}_2 T_{22;1}^{(j-1)} \otimes v_{N-1}] = {}_2 T_{22;1}^{(j-1)} \otimes v_N,
				\]
				and therefore,
				\[
				[E_{N,N-1},{}_{1}\widetilde{v}_{N-j}^{\psi}] = (-1)^{j-1} \cdot {}_2 T_{22;1}^{(j-1)} \otimes  v_{N} .
				\]
				By \cref{lm:ecommute}, we get
				\[
				[E_{N,N-1},(-1)^j  \cdot {}_1 T_{22;1}^{(j)} \otimes v_N] = (-1)^j {}_2 T_{22;1}^{(j-1)} \otimes v_N .
				\]
				Summing up these equalities and recalling \eqref{eq::thm_inductive_vector}, we conclude that 
				\[
				[E_{N,N-1}, \widetilde{v}_{N-j}^{\psi}] = 0,
				\]
				and the induction is complete.
			\end{proof}
		\end{thm}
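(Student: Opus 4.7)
The plan is to prove the statement by strong induction on $N$, peeling off the rightmost column of the subregular pyramid at each step. The base case is $N=2$: then $\m$ is trivial and $1 \otimes v_2$ is automatically Whittaker. For the inductive step, I assume the result is known for the truncated pyramid ${}_1\pi$, which produces a Whittaker vector ${}_1\widetilde{v}_{N-j}^{\psi}$ of the same shape as \eqref{eq::whittaker_vector_geq2} but without the $i=0$ term; the non-standard embedding $\U(\gl_{{}_1 N}) \hookrightarrow \U(\gl_N)$ from \eqref{eq::nonstandard_embedding} ensures that ${}_1\widetilde{v}_{N-j}^{\psi}$ really is annihilated by ${}_1\m^{\psi}$ inside the full quotient. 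I would then write the full vector recursively as
\[
\widetilde{v}_{N-j}^{\psi} = {}_1\widetilde{v}_{N-j}^{\psi} + (-1)^j \cdot {}_1 T_{22;1}^{(j)} \otimes v_N,
\]
so that only the action of the \emph{new} generators in $\m$, namely the $E_{N,k}$ for $k<N$, needs to be checked.

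The next step is to reduce this check to a single computation. For $x \in {}_1\m$, the induction hypothesis kills ${}_1\widetilde{v}_{N-j}^{\psi}$, while $x$ commutes with both ${}_1 T_{22;1}^{(j)}$ (which lives in the truncated W-algebra and thus is ${}_1\m^{\psi}$-invariant in $\U(\g)/\m^{\psi}$) and with $v_N$. For the generators $E_{N,N-k}$ with $1 \leq k \leq N-2$, note that $\psi(E_{N,N-k})=0$ for $k \geq 2$ while $\psi(E_{N,N-1}) = 1$, and that each $E_{N,N-k}$ with $k \geq 2$ arises as a commutator $[E_{N,N-1}^{\psi}, x^{\psi}]$ for some $x \in {}_1\m$. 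Hence the full invariance reduces to checking that $[E_{N,N-1}, \widetilde{v}_{N-j}^{\psi}] = 0$, where I use the adjoint-action presentation of \cref{rmk::whittaker_adjoint}.

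For this final commutator I split $\widetilde{v}_{N-j}^{\psi}$ using the recursion ${}_1\widetilde{v}_{N-j}^{\psi} = {}_2\widetilde{v}_{N-j}^{\psi} + (-1)^{j-1} \cdot {}_2 T_{22;1}^{(j-1)} \otimes v_{N-1}$. Since ${}_2\widetilde{v}_{N-j}^{\psi}$ is built from $\U(\gl_{{}_2 N})$ which does not touch the indices $N-1,N$, it commutes with $E_{N,N-1}$. The term $(-1)^{j-1} \cdot {}_2 T_{22;1}^{(j-1)} \otimes v_{N-1}$ produces $(-1)^{j-1}\cdot {}_2 T_{22;1}^{(j-1)} \otimes v_N$ upon bracketing with $E_{N,N-1}$ (acting on $v_{N-1}$ via $\ad_{E_{N,N-1}}$). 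By \cref{lm:ecommute}, $[E_{N,N-1}, {}_1 T_{22;1}^{(j)}] = {}_2 T_{22;1}^{(j-1)}$, so the last summand contributes $(-1)^j \cdot {}_2 T_{22;1}^{(j-1)} \otimes v_N$. The two contributions cancel, finishing the induction.

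The main obstacle is the last cancellation: it depends crucially on \cref{lm:ecommute}, whose proof in turn relies on the structural identity \eqref{eq::t_relation} between ${}_1 T$ and ${}_2 T$. Without this precise matching between the truncation recursion for the Whittaker vector and the truncation recursion for the Brundan–Kleshchev generators, the coefficients $(-1)^{j-i} \cdot {}_{i+1}T_{22;1}^{(j-i)}$ would not be the right ones, and one would have to re-derive them term-by-term. The rest is essentially bookkeeping once the inductive setup is in place.
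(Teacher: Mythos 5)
Your proposal is correct and follows essentially the same route as the paper: strong induction on the pyramid, the recursive decomposition of $\widetilde{v}_{N-j}^{\psi}$ via truncation, reduction of the invariance check to the single generator $E_{N,N-1}$ by taking commutators with ${}_1\m$, and the key cancellation via \cref{lm:ecommute}. No gaps.
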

		
		\begin{thm}
			\label{thm:whittaker_geq3}
			The remaining vector 
			\begin{equation}
				\label{eq::whittaker_vector_1}
				\tilde{v}_1^{\psi}=1 \otimes v_{1} +\sum_{i=0}^{N-3}(-1)^{N-i-2}\cdot {}_{i+1}T_{12;1}^{(N-i-2)} \otimes v_{N-i} 
			\end{equation}
			is also Whittaker in $\U(\g) \otimes \C^N/\m^{\psi}$.
			\begin{proof}
				Similar to \cref{thm:whittaker_geq2}.
			\end{proof}
		\end{thm}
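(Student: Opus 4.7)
The plan is to carry out induction on $N$, mirroring the argument of \cref{thm:whittaker_geq2}. The base case $N = 2$ is trivial since the sum is empty, the nilpotent subalgebra vanishes, and $1 \otimes v_1$ is tautologically Whittaker.

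For the inductive step, I would split off the $i = 0$ summand and write
\[
\tilde{v}_1^{\psi} = {}_1 \tilde{v}_1^{\psi} + (-1)^{N-2} \cdot {}_1 T_{12;1}^{(N-2)} \otimes v_N,
\]
where
\[
{}_1 \tilde{v}_1^{\psi} := 1 \otimes v_1 + \sum_{i=1}^{N-3}(-1)^{N-i-2} \cdot {}_{i+1} T_{12;1}^{(N-i-2)} \otimes v_{N-i}
\]
is identified, via the non-standard embedding \eqref{eq::nonstandard_embedding}, with the analogous Whittaker vector for the truncated pyramid ${}_1\pi$. By the induction hypothesis ${}_1 \tilde{v}_1^{\psi}$ is annihilated by ${}_1\m^{\psi}$, and the extra term is too: by \cref{thm::subregular_walgebra_basis}, ${}_1 T_{12;1}^{(N-2)}$ lies in the W-algebra, and ${}_1\m$ acts trivially on $v_N$. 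As in \cref{thm:whittaker_geq2}, the bracket identity $E_{N,N-k}^{\psi} = \hbar^{-1}[E_{N,N-1}^{\psi}, x^{\psi}]$ for suitable $x \in {}_1\m$ then reduces the whole problem to showing invariance under $E_{N,N-1}^{\psi}$, i.e.\ $[E_{N,N-1}, \tilde{v}_1^{\psi}] = 0$.

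To verify this final identity, I would decompose one step further as ${}_1 \tilde{v}_1^{\psi} = {}_2 \tilde{v}_1^{\psi} + (-1)^{N-3} \cdot {}_2 T_{12;1}^{(N-3)} \otimes v_{N-1}$. Since both ${}_2 \tilde{v}_1^{\psi}$ and ${}_2 T_{12;1}^{(N-3)}$ lie in the image of $\U(\gl_{N-2})$, they commute with $E_{N,N-1}$; combined with $\ad_{E_{N,N-1}}(v_{N-1}) = v_N$ and $\ad_{E_{N,N-1}}(v_N) = 0$, this yields
\[
[E_{N,N-1}, {}_1 \tilde{v}_1^{\psi}] = (-1)^{N-3} \cdot {}_2 T_{12;1}^{(N-3)} \otimes v_N.
\]
Applying \cref{lm:ecommute} with $i = 1$ and $r = N-2$ gives $[E_{N,N-1}, {}_1 T_{12;1}^{(N-2)}] = {}_2 T_{12;1}^{(N-3)}$, so the second summand in the recursive form contributes $(-1)^{N-2} \cdot {}_2 T_{12;1}^{(N-3)} \otimes v_N$. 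The two terms carry opposite signs and cancel exactly.

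The hard part will mainly be careful bookkeeping of truncation subscripts and signs rather than any genuinely new computation; structurally the argument is identical to that of \cref{thm:whittaker_geq2}, and \cref{lm:ecommute} was already stated for both $i = 1$ and $i = 2$ precisely so that the same cancellation covers the present case.
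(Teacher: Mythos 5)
Your proposal is correct and is exactly the argument the paper is invoking by its one-line "Similar to \cref{thm:whittaker_geq2}": the same induction on $N$ via the recursive decomposition $\tilde{v}_1^{\psi} = {}_1\tilde{v}_1^{\psi} + (-1)^{N-2}\,{}_1T_{12;1}^{(N-2)} \otimes v_N$, reduction to $E_{N,N-1}$-invariance through the bracket identity, and cancellation via \cref{lm:ecommute} with $i=1$. Your bookkeeping of the truncation indices, signs, and superscripts all checks out against \eqref{eq::bk_t_elements_truncated} and \eqref{eq::whittaker_vector_1}.
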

		
		Observe that these vectors do not quite satisfy the assumption of \eqref{eq::whittaker_vector_canonical_form}. However, they are not far away from the canonical form. Recall the definition of the algebra $\l$ from \eqref{eq::l_subalgebra}. Observe that $\l \subset \cW$.
		\begin{defn}
			\label{def::l_constant}
			Consider the natural PBW basis of $\U(\g)$ induced from the basis $\{E_{ij}\}$ of $\g$. An element $x\in \U(\g)$ is called \defterm{$\l$-constant}, if $x\in \U(\l)$.
		\end{defn}
		
		Thanks to upper-triangular form of \eqref{eq::whittaker_vector_geq2} and \eqref{eq::whittaker_vector_1}, we can apply some strictly upper-triangular (hence invertible) matrix with coefficients in $\U(\l)$ to the constructed generators to bring it to the necessary form. Namely, denote by $c_{N-j}^{N-i}$ the $\l$-constant term of $(-1)^{j-i} \cdot {}_{i+1} T_{22;1}^{(j-i)}$. Then we can perform the following inductive operation:
		\begin{equation}
			\widetilde{v}_{N-j}^{\psi} \mapsto \widetilde{v}_{N-j}^{\psi} - \sum\limits_{i=0}^{j-1} v_{N-i}^{\psi} c_{N-j}^{N-i} .
		\end{equation}
		Removing step-by-step all the $\l$-constant terms, we eventually get the canonical generators, that we denote by 
		\begin{equation}
			\label{eq::whittaker_canonical_gens}
			v_i^{\psi} = 1\otimes v_i + \sum\limits_{j>i} x_i^j \otimes v_j,\ x_i^j \in \b \cdot \U(\g),
		\end{equation}
		for $1\leq i \leq N$.
		
		Moreover, this form is actually more refined: we have
		\begin{equation}	
			v_i^{\psi} = 1 \otimes v_i + \sum\limits_{j>i} x_i^j \otimes v_j,\ x_i^j \in \b_{j-1}\cdot \U(\g),
		\end{equation}
		where $\b_{j-1} \subset \gl_{j-1}$ is the truncated Borel subalgebra as in \cref{def::borel_truncation}.
		
		\subsection{Tensor structure}
		\label{sect::tensor}
		The goal of this section is to compute the semi-classical limit of the monoidal isomorphism from \cref{thm::ds_reduction_tensor_structure_trivialization} for the tensor product of $\C^N$ with itself. From now on, we will treat $\hbar$ as a \emph{variable}, in particular, we consider the asymptotic universal enveloping algebra $\U(\gl_N)$ over $\C[\hbar]$. We will need some definitions regarding ``asymptotic'' behavior of elements of $\U(\g)$.
		
		\begin{defn}
			\label{def::asymptotically_linear}
			Consider the natural PBW basis of $\U(\g)$ induced from the basis $\{E_{ij}\}$ of $\g$. We call an element $x\in \U(\g)$ \defterm{constant} if it has degree zero with respect to this basis. It is called \defterm{asymptotically linear} if the PBW degree of $x$ is one and it is constant in $\hbar$. We call $x$ \defterm{asymptotically $\l$-linear} if it constant in $\hbar$ and has the form $x \in y \cdot \U(\l)^{>0}$, where $y \in \b$.
		\end{defn}
		
		\cref{thm::ds_reduction_tensor_structure_trivialization} in this particular case can be reformulated as follows. Let $\{v_i \otimes v_j\}$ be a natural basis of $\C^N \otimes \C^N$. We have two natural choices of generating vectors in $(\U(\g) \otimes \C^N \otimes \C^N /\m^{\psi})^{\m^{\psi}}$: one is provided by \cref{cor::ds_trivialization}, and we denote it by
		\begin{equation}
			\label{eq::canonical_tensor_product_generators}
			(v_i \otimes v_j)^{\psi} = v_i \otimes v_j + \sum\limits_{k,l} x_{ij}^{kl} \otimes v_k \otimes v_l,\ x_{ij}^{kl} \in \b\cdot \U(\p).
		\end{equation}
		Another is given by the monoidal structure \eqref{eq::explicit_tensor_structure} on the Drinfeld-Sokolov reduction: under canonical trivialization \eqref{eq::whittaker_canonical_gens}, we set
		\begin{align*}
			v_i^{\psi} \otimes v_j^{\psi} := v_i^{\psi} \otimes_{\U(\g)} v_j^{\psi} \in &(\U(\g) \otimes \C^N /\m^{\psi})^{\m^{\psi}} \otimes_{\cW} ( \U(\g) \otimes \C^N /\m^{\psi})^{\m^{\psi}} \cong  \\
			\cong &(\U(\g) \otimes \C^N \otimes \C^N/\m^{\psi})^{\m^{\psi}}.
		\end{align*}
		
		\begin{prop}
			\label{prop::monoidal_iso_vector_reps}
			The monoidal isomorphisms $J_{\C^N,\C^N}$ from \cref{thm::ds_reduction_tensor_structure_trivialization} are of the form
			\[
			J_{\C^N, \C^N} \in \id_{\C^N \otimes \C^N \otimes \cW} + \hbar \U(\b)^{>0} \otimes \U(\m)^{>0} \otimes \U(\l),
			\]
			where $\U(\l) \subset \cW$.
			\begin{proof}
				Recall that under identification $\U(\g)/\m^{\psi} \cong \U(\p)$, the generating vectors have the form \eqref{eq::whittaker_canonical_gens}, so,
				\begin{equation}
					\label{eq::prop_tensor_product_1}
					v_i^{\psi} \otimes v_j^{\psi} = \left( 1 \otimes v_i + \sum\limits_{k>i} x_i^k \otimes  v_k \right) \otimes \left(1\otimes v_j + \sum\limits_{l>j} x_j^l \otimes v_l \right).
				\end{equation}
				It follows from construction that for all $j,l$, 
				\[
				x_j^l  = (x_{(1)})_j^l \cdot (x_{(2)})_j^l
				\]
				for some $(x_{(1)})_j^l \in \U(\b)$ and $(x_{(2)})_j^l\in \U(\l)$ (here, we use Sweedler's sum notation). Moreover, observe that 
				\[
				(x_{(2)})_j^l \otimes v_l = (1\otimes v_l) (x_{(2)})_j^l
				\]
				(recall the right action from \cref{subsect::hc_bimodules}). In particular, we get
				\[
				(1\otimes v_k) \cdot x_j^l = \sum\limits_{a\leq k} ((x_{(1)})_{jk}^{la} \otimes v_a ) (x_{(2)})_j^l
				\]
				for some $(x_{(1)})_{jk}^{la} \in \U(\b)$. Therefore, 
				\begin{align}
					\label{eq::prop_tensor_product_2}
					\begin{split}
						v_i^{\psi} \otimes v_j^{\psi} &= 1 \otimes v_i \otimes v_j + \sum\limits_{k > i} x_i^k  \otimes v_k \otimes v_j + \sum\limits_{\substack{l>j \\ k > i}} \sum\limits_{b \leq k} (x_i^k (x_{(1)})_{jk}^{lb} \otimes v_b \otimes v_l) (x_{(2)})_{j}^l + \\
						+& \sum\limits_{\substack{a \leq i \\ l > j}} ((x_{(1)})_{ji}^{la} \otimes v_a \otimes v_l) (x_{(2)})_j^l.
					\end{split}
				\end{align}
				Observe that the second line already has the form \eqref{eq::whittaker_vector_canonical_form}, and the third line almost satisfies this condition as well except for the case when $(x_{(1)})_{ji}^{la}$ is constant; in this case, denote $(x_{(1)})_{ji}^{la} \cdot (x_{(2)})_j^l =: c_{ij}^{al}\in \U(\l)$. We see that the map
				\[
				v_i \otimes v_j \otimes 1 \mapsto v_i \otimes v_j \otimes 1 + \sum_{\substack{a \leq i \\ l > j}} v_a \otimes v_l \otimes c_{ij}^{al}
				\]
				is strictly upper-triangular. In particular, the canonical generators $(v_i \otimes v_j)^{\psi}$ of \eqref{eq::canonical_tensor_product_generators} can be constructed inductively by taking 
				\[
				(v_i \otimes v_j)^{\psi} \otimes 1 := v_i^{\psi} \otimes v_j^{\psi} \otimes 1 - \sum  (v_a \otimes v_l)^{\psi} \otimes c_{ij}^{al}.
				\]
				Then the tensor isomorphism is given by a $\U(\l)$-valued matrix 
				\begin{equation}
					\label{eq::tensor_matrix}
					J_{\C^N,\C^N} = \id_{\C^N \otimes \C^N \otimes \cW} + (c_{ij}^{al}).
				\end{equation}
				
				Moreover, since every commutation produces a power of $\hbar$ by \eqref{eq::hc_right_action}, the second part of the theorem follows.
			\end{proof}
		\end{prop}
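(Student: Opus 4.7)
The plan is to determine the matrix of $J_{\C^N,\C^N}$ by explicitly expanding each product $v_i^{\psi} \otimes_{\U(\g)} v_j^{\psi}$ in the basis of canonical generators $(v_a \otimes v_l)^{\psi}$ from \cref{cor::ds_trivialization}. By the monoidal structure \eqref{eq::explicit_tensor_structure} on the Drinfeld-Sokolov reduction, such an expansion exists uniquely with coefficients in $\cW$, and its matrix is precisely $J_{\C^N,\C^N}$.

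Concretely, I would substitute the explicit form \eqref{eq::whittaker_canonical_gens} for both $v_i^{\psi}$ and $v_j^{\psi}$ and expand the resulting tensor product. The key structural input is that each coefficient $x_j^l$ factors as $x_j^l = y \cdot u$ with $y \in \U(\b)$ and $u \in \U(\l)$, which one gets by tracing through \cref{thm:whittaker_geq2} and \cref{thm:whittaker_geq3} together with the inductive $\l$-constant removal producing \eqref{eq::whittaker_canonical_gens}. Since $\U(\l) \subset \cW$ acts on $\U(\g) \otimes \C^N \otimes \C^N /\m^{\psi}$ from the right, the factor $u$ is immediately isolated as a $\cW$-coefficient. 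The remaining $y \in \U(\b)$ must be commuted past the leftmost factor $1 \otimes v_i$ coming from $v_i^{\psi}$; by the right action formula \eqref{eq::hc_right_action}, each elementary commutation produces an $\hbar$ and lowers the PBW degree of the first tensor slot, so the process terminates.

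After all such commutations, the sum decomposes into two kinds of terms: those whose leftmost tensor factor already lies in $\b\cdot\U(\g)$, which match the canonical form of some $(v_a \otimes v_l)^{\psi}$ directly, and those whose leftmost factor has become a scalar, which must be absorbed by subtracting $\cW$-multiples of the $(v_a \otimes v_l)^{\psi}$. This absorption is strictly upper-triangular with respect to the lexicographic order on pairs $(a,l)$, because the summation ranges in \eqref{eq::whittaker_canonical_gens} are $k > i$ and $l > j$. Inverting the triangular matrix inductively yields entries $c_{ij}^{al}\in \U(\l)$ of $J_{\C^N,\C^N} - \id$, placing the image of $J_{\C^N,\C^N}$ in the required form.

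The main obstacle is the careful $\hbar$-accounting: I expect to verify that the only contributions to the identity part of $J_{\C^N,\C^N}$ are the terms where no commutation was performed (so the leftmost factor of $v_j^{\psi}$ passes trivially through $1\otimes v_i$), whereas every correction entry $c_{ij}^{al}$ records the result of at least one commutator of the form \eqref{eq::hc_right_action}, hence is divisible by $\hbar$. The refinement placing the first tensor slot in $\U(\b)^{>0}$ and the second in $\U(\m)^{>0}$ should then follow from the fact that $y \in \U(\b)^{>0}$ is what survives a non-trivial commutation and that $l > j$ by upper-triangularity of $v_j^{\psi}$.
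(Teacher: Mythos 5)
Your proposal matches the paper's proof in both structure and substance: you expand $v_i^{\psi}\otimes v_j^{\psi}$ using the canonical form \eqref{eq::whittaker_canonical_gens}, factor each $x_j^l$ into a $\U(\b)$-part and a $\U(\l)$-part that slides through as a right $\cW$-coefficient, commute the $\U(\b)$-part past $1\otimes v_i$ via \eqref{eq::hc_right_action} to produce the constant corrections (which are therefore divisible by $\hbar$), and invert the strictly triangular change-of-basis to read off $J_{\C^N,\C^N}-\id$. This is exactly \eqref{eq::prop_tensor_product_1}--\eqref{eq::tensor_matrix} in the paper. One small imprecision: the absorption is triangular because $l>j$ is strict (with $a\leq i$ coming from the commutation, not from the range $k>i$ in \eqref{eq::whittaker_canonical_gens}), so the induction is really on increasing $l$ rather than on the standard lexicographic order on $(a,l)$; this does not affect the conclusion.
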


		We will compute the first $\hbar$-power $\mathbf{j}$ of the matrix \eqref{eq::tensor_matrix}. In other words, we need to compute the first $\hbar$-powers of the coefficients $(x_j^l)_i^a$ of \eqref{eq::prop_tensor_product_2}.

		\begin{prop}
			\label{prop::asymptotically_linear_contribution}
			The only non-trivial contribution to the first $\hbar$-power from $(x_j^l)_i^a$ comes from asymptotically linear and $\l$-linear terms of $x_j^l$ in \eqref{eq::prop_tensor_product_1}.
			\begin{proof}
				Indeed: in a PBW basis, the element $x_j^l$ is the sum of products of the form $\hbar^b y_1\cdots y_a\cdot x$ with $a\geq 1$ for some $\{y_m \} \subset \g$, some $\hbar$-power $b$, and $x\in \U(\l)$. If it is not asymptotically ($\l$)-linear, then there are two cases:
				\begin{enumerate}
					\item It is not linear, i.e. $a\geq 2$; for simplicity, we demonstrate it for $a=2$, but the general argument is the same:
					$$(1\otimes v_i) \cdot \hbar^b y_1 y_2 x= \hbar^b (y_1 \otimes v_i) y_2x - (\hbar^{b+1} \otimes [y_1,v_i])y_2 x$$
					$$=\hbar^b (y_2 y_1 \otimes v_i) x- \hbar^{b+1} ([y_2,y_1] \otimes v_i )x- \hbar^{b+1} (y_1 \otimes [y_2,v_i] )x-\hbar^{b+1}(y_2 \otimes [y_1,v_i])x + (\hbar^{b+2} \otimes [y_2,[y_1,v_i]])x.$$
					So, we see that there is no contribution to the first $\hbar$-power of constant terms for $a\geq 2$.
					\item It is linear, but divisible by $\hbar$, i.e. $b\geq 1$. Then 
					\[
					(1 \otimes v_i) \hbar^b y_1x = \hbar^b (y_1 \otimes v_i)x - (\hbar^{b+1} \otimes [y_1,v_i])x,
					\]
					and there is also no contribution to the first $\hbar$-power of constant terms.
				\end{enumerate}
				Thus, if $x_{j}^{l}$ is not asymptotically ($\l$)-linear, then it must be asymptotically linear for there to be non-trivial contribution to the first $\hbar$ power. At the same time, the calculation above shows that for $b=0$, $\l$-linear terms may contribute to the first power, and the proposition follows.   
			\end{proof}
		\end{prop}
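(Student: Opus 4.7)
The plan is to expand $x_j^l \in \b \cdot \U(\p)$ in a PBW basis of $\U(\p)$ adapted to a decomposition $\p = \p' \oplus \l$ for some complement $\p'$ of $\l$, writing each monomial as $\hbar^b y_1 \cdots y_s \cdot x$ with $y_m \in \p'$ and $x \in \U(\l)$, and then to show that the $\U(\g)$-scalar coefficient of $v_a$ in $(1 \otimes v_i) \cdot (\hbar^b y_1 \cdots y_s x)$ appears only at $\hbar$-order at least $s + b$. Since $\U(\l) \subset \cW$, the factor $x$ moves cleanly to the right: $(1 \otimes v_i) \cdot (\hbar^b y_1 \cdots y_s x) = ((1 \otimes v_i) \cdot \hbar^b y_1 \cdots y_s) \cdot x$, and the multiplication by $x$ on the right only contributes to the $\U(\l)$-part of the eventual coefficient without affecting the $\hbar$-order at which a scalar-in-$\U(\g)$ term can appear.

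I would then analyze $(1 \otimes v_i) \cdot y_1 \cdots y_s$ by iterating the right-action formula \eqref{eq::hc_right_action}, which writes each $(Y \otimes v) \cdot \xi$ as a sum of $\xi Y \otimes v$ (preserving the $\U(\g)$-PBW-degree of $Y$) and $-\hbar \ad_\xi(Y \otimes v)$ (lowering the $\U(\g)$-PBW-degree by one at the cost of one power of $\hbar$). Starting from $1 \otimes v_i$ of PBW-degree $0$ and multiplying on the right by $y_1, \ldots, y_s$ in turn, the top-PBW-degree term is $y_1 \cdots y_s \otimes v_i$, and each PBW-degree-$(s - k)$ term carries exactly $\hbar^k$; in particular, the scalar-in-$\U(\g)$ contribution (PBW-degree $0$) first appears at order $\hbar^s$. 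Combining with the prefactor $\hbar^b$, the overall order is $\hbar^{s+b}$, so a first-order-in-$\hbar$ contribution forces $(s, b) = (1, 0)$.

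Together with $x_j^l \in \b \cdot \U(\g)$ forcing $y_1 \in \b$, this isolates exactly two cases: $x = 1$ with $x_j^l \in \b$ (asymptotically linear) and $x \in \U(\l)^{>0}$ with $x_j^l = y_1 x$ (asymptotically $\l$-linear), matching \cref{def::asymptotically_linear}. The main obstacle is the $\hbar$-counting in the middle step: one must rigorously verify that each PBW-degree-$(s-k)$ stratum picks up precisely $\hbar^k$, and in particular that there is no unexpected cancellation at this order. This requires careful bookkeeping of the two summands of \eqref{eq::hc_right_action}, which I would carry out by strong induction on $s$, noting that in the inductive step the commutator contributions are themselves of strictly lower PBW-degree and come accompanied by exactly the expected additional $\hbar$; the small cases $s = 1, 2$ are simple enough to verify directly and confirm the pattern.
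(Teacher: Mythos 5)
Your proposal is correct and follows essentially the same route as the paper: expand $x_j^l$ in a PBW basis with the $\U(\l)$-factor pulled out to the right, push $1 \otimes v_i$ through the remaining factors using \eqref{eq::hc_right_action}, and observe that the scalar-in-$\U(\g)$ (PBW-degree-zero) part of the result first appears at $\hbar$-order $s+b$, forcing $(s,b)=(1,0)$ and hence the asymptotically linear or $\l$-linear cases. Your strong-induction-on-$s$ formulation is a slightly more systematic rendering of the paper's ``demonstrate for $a=2$, the general argument is the same,'' but the underlying $\hbar$-degree count is identical.
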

		
		Unfortunately, the only explicit form of Whittaker vectors in $\U(\g) \otimes \C^N /\m^{\psi}$ available so far is \eqref{eq::whittaker_vector_geq2} or \eqref{eq::whittaker_vector_1} which is not canonical; however, thanks to the next lemmas, it does not affect calculations too much.
		\begin{lm}
			\label{lm::constant_terms_t}
			For any parameters, the $\l$-constant part of ${}_a T_{ij;x}^{(r)}$ is divisible by $\hbar$.
			\begin{proof}
				From \cref{def:t_els}, we note that a constant term exists in $T_{ij;x}^{(r)}$ if and only if $i_k=j_k$ for all $1 \le k \le s$. But from condition \eqref{item::degree}, we note that $\text{col}(j_k)-\text{col}(i_k)+1=1$ for all $k$. Thus, $s=r$ and constant terms only come from summands of the form
				\[
				\widetilde{E}_{i_1 i_1} \cdot \ldots \cdot \widetilde{E}_{i_r i_r}.
				\] By \eqref{eq::modified_generators}, it is clear that the constant term is proportional to $\hbar^r$.
				
				As for general $\l$-constant terms, it follows from the formula \eqref{eq::bk_t_elements} that $T_{22;1}^{(r)}$ is the sum of elements of the form
				\[
				\widetilde{E}_{2,1} \widetilde{E}_{1,1}^k \cdot x,\ x\in \U(\b)
				\]
				for some $k$. Note $x$ must commute with $E_{11}$ from condition \eqref{item::second}. Thus, commuting $x$ to the left produces some elements of $\U(\l)$, but they are divisible by $\hbar$ because of commutation. The same is true for $T_{12;1}^{(r)}$, where the terms have the form $\widetilde{E}_{1,1}^k\cdot x$ for some $k$ and $x\in \U(\b)$.
			\end{proof}
		\end{lm}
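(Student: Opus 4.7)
The plan is to analyze the $\l$-constant part of ${}_a T_{ij;x}^{(r)}$ directly from the explicit formula \eqref{eq::bk_t_elements} together with the modified-generator definition \eqref{eq::modified_generators}, splitting the argument into the scalar (PBW degree zero) part and the non-scalar $\U(\l)$-valued part.

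For the scalar part, I would observe that a product $\widetilde{E}_{i_1,j_1}\cdots\widetilde{E}_{i_s,j_s}$ contributes to the scalar part of its PBW expansion only when each factor is replaced by its scalar component. By \eqref{eq::modified_generators}, $\widetilde{E}_{i,j}$ has a nonzero scalar piece precisely when $i = j$, and that piece is proportional to $\hbar$. Combined with the parabolicity condition \eqref{item::bk_t_parabolic} and the degree identity \eqref{item::degree}, imposing $i_k = j_k$ for all $k$ forces $\col(i_k) = \col(j_k)$ and hence $s = r$. Thus each scalar contribution is a product of $r$ factors each proportional to $\hbar$, so the scalar part of ${}_a T_{ij;x}^{(r)}$ is divisible by $\hbar^r$, in particular by $\hbar$.

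For the non-scalar $\l$-constant part, the relevant cases in the subregular setting are $T_{22;1}^{(r)}$ and $T_{12;1}^{(r)}$ (cf. \cref{thm::subregular_walgebra_basis}). I would argue that, after applying the subregular pyramid \eqref{eq::subregular_pyramid} to the conditions \eqref{item::first}--\eqref{item::sixth}, every nonzero summand of \eqref{eq::bk_t_elements} contributing to $T_{22;1}^{(r)}$ (respectively $T_{12;1}^{(r)}$) factors as $\widetilde{E}_{2,1}\widetilde{E}_{1,1}^k\cdot x$ (respectively $\widetilde{E}_{1,1}^k\cdot x$), where $x$ is a product of $\widetilde{E}_{a,b}$'s lying in $\U(\b)$. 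To isolate a $\U(\l)$-valued PBW monomial from such a product one must commute $x$ to the left past the $\U(\l)$-factors; the relevant $\widetilde{E}_{a,b}$'s appearing in $x$ commute with $E_{1,1}$ only up to an $\hbar$-bracket by \eqref{eq::asymptotic_commutator}, so every reordering that lands in $\U(\l)$ acquires at least one factor of $\hbar$.

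The main obstacle is the second step: one must verify rigorously that no legal summand of \eqref{eq::bk_t_elements} can produce an $\l$-constant contribution ``for free'', i.e.\ without passing through a commutator. This reduces to checking, using \eqref{item::first}--\eqref{item::sixth}, that in any legal index sequence for $T_{22;1}^{(r)}$ or $T_{12;1}^{(r)}$ the remainder $x$ is never itself an element of $\U(\l)$, apart from the cases already accounted for in the scalar analysis. Provided this combinatorial claim is in hand, every $\U(\l)$-valued extraction necessarily passes through a nonzero bracket and thereby acquires the required factor of $\hbar$.
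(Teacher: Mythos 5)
Your proposal follows the paper's proof step for step: the scalar part is argued identically (forcing $i_k = j_k$ for all $k$, hence $s=r$, each factor contributing a factor of $\hbar$ via \eqref{eq::modified_generators}), and the nonscalar $\l$-constant part relies on the same factorization $\widetilde{E}_{2,1}\widetilde{E}_{1,1}^k\cdot x$ with $x\in\U(\b)$ together with the observation that any passage into $\U(\l)$ under PBW reordering costs an $\hbar$-bracket. The combinatorial verification you flag as the remaining obstacle is also left implicit in the paper's proof (``it follows from the formula \eqref{eq::bk_t_elements}\ldots''); the case analysis of conditions \eqref{item::first}--\eqref{item::sixth} that justifies it is effectively carried out in \cref{prop::asymptotically_linear_part}, so your assessment of where the work lies is accurate. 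One wording discrepancy worth noting: the paper claims ``$x$ must commute with $E_{1,1}$,'' whereas you correctly observe that factors of $x$ such as $E_{1,r}$ do not commute with $E_{1,1}$; since those brackets $[E_{1,r},E_{1,1}]=-E_{1,r}$ land back in $\b$ rather than $\l$, they generate no $\l$-constant terms and the divisibility conclusion is unaffected, so your version of the mechanism is if anything the more careful one.
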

		
		Recall the coefficients $x_{N-j}^{N-i}$ from \eqref{eq::whittaker_canonical_gens}.
		
		\begin{lm}
			\label{lm::asymptotically_linear_terms_same}
			The asymptotically $\l$-linear terms in $(-1)^{j-i} \cdot {}_{i+1} T_{22;1}^{(j-i)}$ are the same as in $x_{N-j}^{N-i}$ for $N-j \neq 1$. Likewise, the asymptotically $\l$-linear terms in ${}_{i+1} T_{12;1}^{(N-i-1)}$ are the same as $x_1^{N-l}$.
			\begin{proof}
				Recall \eqref{eq::whittaker_canonical_gens} that the canonical generators can be constructed from $\{\widetilde{v}_i^{\psi}\}$ by inductively removing $\l$-constant terms. But, according to \cref{lm::constant_terms_t}, they are all divisible by $\hbar$, and the statement follows.
			\end{proof}
		\end{lm}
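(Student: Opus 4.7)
The plan is to compare, for each pair $(j,i)$, the coefficient of $v_{N-i}$ in the explicit Whittaker vector $\widetilde{v}_{N-j}^\psi$ from \cref{thm:whittaker_geq2} with the coefficient $x_{N-j}^{N-i}$ in the canonical Whittaker vector $v_{N-j}^\psi$ from \eqref{eq::whittaker_canonical_gens}. According to the normalization procedure described immediately above the lemma, the latter is obtained from the former by the iterated substitution
$$\widetilde{v}_{N-j}^\psi \;\longmapsto\; \widetilde{v}_{N-j}^\psi \;-\; \sum_{i=0}^{j-1} v_{N-i}^\psi\, c_{N-j}^{N-i},$$
in which each $c_{N-j}^{N-i}\in \U(\l)$ is the $\l$-constant part of the coefficient of $v_{N-i}$. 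The main goal is therefore to show that every subtracted term $v_{N-i}^\psi\,c_{N-j}^{N-i}$ lies in $\hbar\cdot(\U(\g)\otimes\C^N)$; since asymptotically $\l$-linear terms are $\hbar$-independent by \cref{def::asymptotically_linear}, they will then be preserved by the normalization.

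The first input is \cref{lm::constant_terms_t}, which already tells us that every $\l$-constant part of ${}_{i+1}T_{22;1}^{(j-i)}$, hence each $c_{N-j}^{N-i}$, is divisible by $\hbar$. The second input is the right Harish-Chandra action from \cref{subsect::hc_bimodules}, which on the free bimodule $\U(\g)\otimes \C^N$ takes the explicit form $(y\otimes v)\cdot \xi = y\xi \otimes v - \hbar\, y\otimes \xi v$. Under the hypothesis $N-j\neq 1$, every tensor factor $v_k$ appearing in $\widetilde{v}_{N-j}^\psi$ and in the already-canonicalized $v_{N-i}^\psi$ for $i<j$ has index $k\geq 3$, so $E_{11}v_k=E_{21}v_k=0$ and the right action of $\U(\l)$ reduces to ordinary right multiplication of the $\U(\g)$-factor by $c_{N-j}^{N-i}$. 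Combined with the $\hbar$-divisibility of each $c_{N-j}^{N-i}$, the total correction transforming $\widetilde{v}_{N-j}^\psi$ into $v_{N-j}^\psi$ is divisible by $\hbar$, which yields the first assertion.

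For the second assertion, about $\widetilde{v}_1^\psi$ from \cref{thm:whittaker_geq3}, I would apply the same two-step argument: the tensor factors $v_{N-i}$ in the sum again all have index $\geq 3$, so the right $\l$-action on them still vanishes, and the $\l$-constant coefficients extracted at each step are still divisible by $\hbar$ by \cref{lm::constant_terms_t}. The main place to be careful --- and the only real obstacle --- is to verify that no asymptotically $\l$-linear term of order $\hbar^0$ can be created as a by-product: concretely, that no commutator appearing in $(y\otimes v)\cdot \xi$ produces an $\hbar^0$ term proportional to $y'\cdot \U(\l)^{>0}$ with $y'\in \b$. This is ruled out because every commutator in the right-$\U(\g)$-action carries an explicit factor of $\hbar$, and the multipliers $c_{N-j}^{N-i}$ themselves are already divisible by $\hbar$, so no $\hbar^0$ contribution can appear on the right-hand side of the substitution. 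The proof then concludes exactly as in the first case.
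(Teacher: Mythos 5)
Your argument is correct and is essentially the paper's own proof: the passage from $\widetilde{v}_{N-j}^{\psi}$ to the canonical $v_{N-j}^{\psi}$ only subtracts terms of the form $v_{N-i}^{\psi}c_{N-j}^{N-i}$ with $c_{N-j}^{N-i}\in\U(\l)$ divisible by $\hbar$ (by \cref{lm::constant_terms_t}, and inductively for later steps), so the $\hbar$-independent asymptotically $\l$-linear part is untouched. Your extra discussion of the right $\U(\l)$-action annihilating the tensor factors $v_k$ is harmless but not needed, since, as you note yourself, the $\hbar$-divisibility of the multipliers already rules out any $\hbar^{0}$ contribution.
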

		
		Therefore, by \cref{prop::asymptotically_linear_contribution}, it is enough to consider only the asymptotically linear terms of $T$-generators.
		\begin{prop} The explicit forms for the asymptotically linear and $\l$-linear terms are given below.
			\label{prop::asymptotically_linear_part}
			\begin{itemize}
				\item The asymptotically linear part of $T_{ij;x}^{(r)}$ is 
				\[
				\sum\limits_{\substack{\row(i_1) = i, \\ \row(j_1) = j, \\ \col(j_1)- \col(i_1) + 1 = r}} (-1)^{r-1} E_{i_1,j_1}.
				\]
				\item The asymptotically $\l$-linear terms of ${}_{i+1}T_{22;1}^{(j-i)}$ are 
				\[
				\sum\limits_{r=2}^{j-i} (-1)^{r}  E_{1,r} E_{2,1} E_{1,1}^{j-i-r}.
				\]
				\item The asymptotically $\l$-linear terms of ${}_{i+1}T_{12;1}^{(N-i-1)}$ are
				\[
				\sum\limits_{r=2}^{N-i-2} (-1)^{r} E_{1,r} E_{1,1}^{N-i-r-1}. 
				\]
			\end{itemize}
			
			\begin{proof}
				Recall the formula \eqref{eq::bk_t_elements} of $T$-generators. Observe that in order to have a (not necessarily asymptotically) linear term in a summand with $s$ terms, we need at least $s-1$ of those $\widetilde{E}_{i_l,j_l}$ to carry a constant term. Hence, we must have $i_l=j_l$ for at least $s-1$ values of $l$ where $1 \le l \le s$. But then $\hbar$ divides each of these constant terms by \eqref{eq::modified_generators}, so we require $s-1=0$. Therefore, only the linear part
				\[
				\sum\limits_{\substack{\row(i_1) = i, \\ \row(j_1) = j, \\ \col(j_1)- \col(i_1) + 1 = r}} \tilde{E}_{i_1,j_1}
				\]
				can contribute to the first power of $\hbar$, which is precisely the formula from the statement.
				
				Now let us study the asymptotically $\l$-linear terms of ${}_{i+1} T_{22;1}^{(j-i)}$. Consider a summand of \eqref{eq::bk_t_elements}. By condition \eqref{item::first}, $\row(i_1) = 2$. Assume that $\col(i_1)>1$. Then by condition \eqref{item::bk_t_parabolic}, $\col(j_1)>1$, and so $\row(j_1) = 2$. In particular, $\sigma_{\row(j_1)} = +$ meaning that $\col(i_2)>1$ by condition \eqref{item::fifth}. Continuing, we obtain that this summand cannot contain $E_{1,1}$ or $E_{2,1}$.
				
				Consider $\col(i_1)=1$, i.e. $\widetilde{E}_{i_1,j_1} = E_{2,1}$. By condition \eqref{item::second}, $\row(j_1)=1 = \row(i_2)$. If $\row(j_2) > 1$ and we can apply previous arguments to conclude that the corresponding summand is $E_{2,1} \widetilde{E}_{1,r}$ (recall that we are interested only in $\l$-linear terms). Otherwise, by condition \eqref{item::sixth}, we see that $i_3 = 1$, and we can repeat the argument. Summing all cases, we obtain a summand of the form (observe that we drop all the $\hbar$-factors)
				\[
				(-1)^r E_{2,1} E_{1,1}^k E_{1,r}.
				\]
				Now we commute the $\l$-part to the right. Observe that it produces powers of $\hbar$, so,  
				\[
				(-1)^r E_{2,1} E_{1,1}^k E_{1,r} = (-1)^r E_{1,r} E_{2,1} E_{1,1}^k + O(\hbar).
				\]
				The relation between $r$ and $k$ follows from the degree condition \eqref{item::degree}.
				
				The analysis for ${}_{i+1} T_{12;1}^{(N-i-1)}$ is similar and will be omitted.
			\end{proof}
		\end{prop}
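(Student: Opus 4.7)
The plan is to work directly from \cref{def:t_els} and carry out the analysis modulo $\hbar$. Since $\widetilde{E}_{i,j}\equiv (-1)^{\col(j)-\col(i)}E_{i,j}\pmod{\hbar}$ and every commutator in $\U(\g)$ carries a factor of $\hbar$, each summand of \eqref{eq::bk_t_elements} is determined at the $\hbar^0$ level by the multiset $\{E_{i_k,j_k}\}$ together with the sign $\sigma_{\row(j_1)}\cdots\sigma_{\row(j_{s-1})}\prod_k(-1)^{\col(j_k)-\col(i_k)}$; PBW-reordering introduces strictly higher powers of $\hbar$, which I ignore throughout.

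For the first bullet I would show that only $s=1$ contributes: if $s\ge 2$ and every factor contributes its linear $E_{i_k,j_k}$-part, then the associated monomial has PBW degree $s\ge 2$, and reducing it to PBW degree $1$ requires at least $s-1$ commutators and therefore a factor of $\hbar^{s-1}$; the alternative of invoking any diagonal $\delta_{ij}\hbar\rho$-correction brings in $\hbar$ directly. Only the single-factor term survives, and conditions \eqref{item::first} and \eqref{item::degree} identify its value as $(-1)^{r-1}E_{i_1,j_1}$, with $\row(i_1)=i$, $\row(j_1)=j$, and $\col(j_1)-\col(i_1)+1=r$. For the second bullet, I would use that in the subregular pyramid row~$1$ contains only the single box $1$ while row~$2$ contains boxes $2,3,\ldots$, together with $\sigma_1=-$, $\sigma_2=+$. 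An asymptotically $\l$-linear multiset consists of a single $\b$-factor and arbitrarily many copies of $E_{1,1}$ and $E_{2,1}$, all of which sit in column~$1$. The outer condition $\row(i_1)=2$ together with the $\l$-requirement on the first factor forces it to be $E_{2,1}$; repeated use of condition \eqref{item::sixth} with $\sigma_1=-$ then pushes $i_{k+1}=1$ whenever $j_k=1$, so every subsequent $\l$-factor is forced to be $E_{1,1}$. Since $\row(j_s)=2$ while no $\l$-element has a row-$2$ destination, the unique $\b$-factor must be terminal, namely $\widetilde{E}_{1,r}$ with $r\ge 2$ a row-$2$ box. I would rule out inserting the $\b$-factor at an earlier position by observing that any row-$1$ image it produces drives the chain back into column~$1$, so the terminal factor would again have to be an additional $\b$-element; and similarly the first factor cannot itself be in $\b$ because $\sigma_2=+$ combined with condition \eqref{item::fifth} demands an impossible column jump. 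Condition \eqref{item::degree} fixes the number of intermediate $E_{1,1}$'s as $k=(j-i)-r$, and the product of $\sigma$-prefactors with $\widetilde{E}$-signs, followed by commuting the $\l$-tail past $\widetilde{E}_{1,r}$ modulo $\hbar$, yields the asserted summand $(-1)^rE_{1,r}E_{2,1}E_{1,1}^{j-i-r}$.

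The third bullet follows by the same argument but with outer condition $\row(i_1)=1$, which in the subregular pyramid forces $i_1=1$ directly; the chain then consists solely of $E_{1,1}$'s terminating in the single $\b$-factor $\widetilde{E}_{1,r}$, and the degree count gives the exponent $N-i-r-1$. The main obstacle will be the elimination step of the second bullet---verifying that the chain constraints genuinely disallow the $\b$-factor from sitting anywhere except the terminal position and disallow the first factor from being in $\b$---after which the remainder is a routine bookkeeping of signs and column labels.
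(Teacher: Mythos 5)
Your route is essentially the paper's: expand \eqref{eq::bk_t_elements} modulo $\hbar$, argue that only $s=1$ chains can contribute to the asymptotically linear part, and for the $\l$-linear part force the chain shape $\widetilde{E}_{2,1}\widetilde{E}_{1,1}^{k}\widetilde{E}_{1,r}$ (resp.\ $\widetilde{E}_{1,1}^{k}\widetilde{E}_{1,r}$) from conditions \eqref{item::first}--\eqref{item::sixth} of \cref{def:t_els}, then commute the $\l$-tail modulo $\hbar$ and fix $k$ by \eqref{item::degree}. Your elimination of a non-terminal $\b$-factor and of a $\b$-valued first factor is, if anything, spelled out more fully than in the paper.

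The gap is the sign step you assert without carrying out. In the forced chain every intermediate $j_q$ is the box $1$, which lies in row $1$, and $\sigma_1=-$ since $x=1$; hence the prefactor $\sigma_{\row(j_1)}\cdots\sigma_{\row(j_{s-1})}$ equals $(-1)^{k+1}$ for the $T_{22}$-chain and $(-1)^{k}$ for the $T_{12}$-chain, while the $\widetilde{E}$-signs contribute $(-1)^{r}$. Their product is $(-1)^{k+1+r}$, i.e.\ $(-1)^{j-i+1}$ after the degree count in the $T_{22}$ case (and similarly an $r$-independent sign in the $T_{12}$ case), not the asserted $(-1)^r$. Concretely, for the subregular pyramid on three boxes, $T_{22;1}^{(2)}=\widetilde{E}_{2,3}-\widetilde{E}_{2,1}\widetilde{E}_{1,2}+\widetilde{E}_{2,2}\widetilde{E}_{3,3}$, where the minus sign on the middle summand is exactly the prefactor $\sigma_{\row(j_1)}=\sigma_1$; its asymptotically $\l$-linear part is $-E_{1,2}E_{2,1}$, whereas the displayed formula with $j-i=2$, $r=2$ gives $+E_{1,2}E_{2,1}$. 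The paper's own proof is silent on the prefactor at this point and writes the same $(-1)^r$, so you are not deviating from it; but since you explicitly invoke ``the product of $\sigma$-prefactors with $\widetilde{E}$-signs,'' you must either show why the factor $(-1)^{k+1}$ disappears or correct the stated sign. Relatedly, in the third bullet the degree condition \eqref{item::degree} applied to the stated superscript $N-i-1$ gives $k=N-i-r$, not $N-i-r-1$; the displayed exponent matches the element $T_{12;1}^{(N-i-2)}$ actually used in \cref{thm:whittaker_geq3}, so your ``routine bookkeeping'' also needs to resolve this index shift rather than inherit it.
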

		
		Combining all preliminary results, we can compute the tensor isomorphism \cref{thm::ds_reduction_tensor_structure_trivialization} for the vector representation.
		\begin{prop}
			\label{prop::semiclassical_limit_vector}
			The monoidal isomorphism $J_{\C^N,\C^N}$ from \cref{prop::monoidal_iso_vector_reps} has the form
			\[
			J_{\C^N,\C^N} = \id_{\C^N \otimes \C^N \otimes \cW} + \hbar \mathbf{j}_{\C^N,\C^N} + O(\hbar^2),
			\]
			where $\mathbf{j}_{\C^N,\C^N}$ is
			\begin{align*}
				\mathbf{j}_{\C^N,\C^N} = \mathbf{j}_c + \sum\limits_{j=2}^{N-2} \sum\limits_{i=j+2}^{N} \sum\limits_{r=2}^{i-j} (-1)^{i-j-r} x_{21} x_{11}^{i-j-r} E_{1,r} \otimes E_{i,j} + \sum\limits_{i=4}^{N} \sum\limits_{r=2}^{i-2} (-1)^{i-r} x_{11}^{i-r-1} E_{1,r} \otimes E_{i,1} 
			\end{align*}
			with $x_{21},x_{11}$ the coordinate functions on $\l^*$ corresponding to $E_{21},E_{11} \in \l$. Here,
			\[
			\mathbf{j}_c = \sum\limits_{j=2}^{N-1} \sum\limits_{i=j+1}^{N} \left( \sum\limits_{l=2}^{j} E_{l,l+i-j-1} \right) \otimes E_{i,j} + \sum\limits_{i=3}^{N} E_{1,i-1} \otimes E_{i,1}
			\]
			is a map $\b^* \rightarrow \m$ from \cref{prop::wonderbolic_r_matrix}.
			\begin{proof}
				Denote by $L_{N-j}^{N-k}$ the asymptotically linear part of 
				\begin{align*}
					(-1)^{j-k} {}_{k+1} T_{22;1}^{(j-k)},\ &N-j\neq 1, \\
					(-1)^{N-k-2} \cdot {}_{k+1} T_{12;1}^{(N-k-2)},\ &N-j=1.
				\end{align*}
				It follows from \cref{prop::asymptotically_linear_part} that 
				\begin{align}
					\begin{split}
						L_{N-j}^{N-k} &= -\sum\limits_{l=2}^{N-j} E_{l,l+j-k-1}, \\
						L_{1}^{N-k} &= -E_{1,N-k-1}.
					\end{split}
				\end{align}
				Combing \cref{eq::prop_tensor_product_2}, \cref{lm::constant_terms_t}, and \cref{lm::asymptotically_linear_terms_same}, the constant part of the first $\hbar$-power of $J_{\C^N,\C^N}$ is given by the action of 
				\[
				\sum\limits_{j=1}^{N-2} \sum\limits_{k=0}^{j-1} \left( \sum\limits_{l=2}^{N-j} E_{l,l+j-k-1} \right) \otimes E_{N-k,N-j} + \sum\limits_{k=0}^{N-3} E_{1,N-k-1} \otimes E_{N-k,1}.
				\]
				Changing coefficients:
				\[
				\sum\limits_{j=2}^{N-1} \sum\limits_{i=j+1}^{N} \left( \sum\limits_{l=2}^{j} E_{l,l+i-j-1} \right) \otimes E_{i,j} + \sum\limits_{i=3}^{N} E_{1,i-1} \otimes E_{i,1}.
				\]
				Now let us consider the ``dynamical'' part. It follows from \cref{prop::asymptotically_linear_part} that the asymptotically $\l$-linear part of the coefficient $(-1)^{j-i} \cdot {}_{i+1} T_{22;1}^{(j-i)}$ of $v_{N-i}$ in $\widetilde{v}_{N-j}^{\psi}$ is 
				\[
				\sum\limits_{r=2}^{j-i} (-1)^{j-i+r} E_{1,r} E_{2,1} E_{1,1}^{j-i-r} .
				\]
				Recall from the construction of the tensor structure from \cref{prop::monoidal_iso_vector_reps} that we need to compute the first $\hbar$-power of the right action
				\[
				\sum\limits_{r=2}^{j-i} (-1)^{j-i+r-1} (1 \otimes v_k) \cdot E_{1,r} E_{21} E_{11}^{j-i-r}
				\]
				modulo $\b$ for every $k$. It follows that it is equal to
				\[
				\sum\limits_{r=2}^{j-i} \left((-1)^{j-i+r}  \otimes \ad_{E_{1,r}}(v_k)\right) E_{2,1} E_{1,1}^{j-i-r},
				\]
				which for all admissible $i,j$, gives the contribution
				\[
				\sum\limits_{j=2}^{N-2} \sum\limits_{i=j+2}^{N} \sum\limits_{r=2}^{i-j} (-1)^{i-j-r} x_{21} x_{11}^{i-j-r}E_{1,r} \otimes E_{i,j}. 
				\]
				Likewise, by considering the coefficients of $v_1^{\psi}$, we get the contribution
				\[
				\sum\limits_{i=4}^{N} \sum\limits_{r=2}^{i-1} (-1)^{i-r} x_{11}^{i-r} E_{1,r} \otimes E_{i,1},
				\]
				and the theorem follows.
			\end{proof}
		\end{prop}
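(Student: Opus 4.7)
The plan is to combine all the preliminary results developed so far. By \cref{prop::monoidal_iso_vector_reps}, the isomorphism $J_{\C^N,\C^N}$ is encoded by the $\U(\l)$-valued ``correction'' matrix built from the coefficients $x_j^l$ of the canonical Whittaker vectors $v_j^{\psi}$ in \eqref{eq::whittaker_canonical_gens}, and \cref{prop::asymptotically_linear_contribution} restricts the first $\hbar$-power to the asymptotically linear and asymptotically $\l$-linear parts of these coefficients. By \cref{lm::constant_terms_t} and \cref{lm::asymptotically_linear_terms_same}, these parts coincide with those of the explicit $T$-generators appearing in \eqref{eq::whittaker_vector_geq2} and \eqref{eq::whittaker_vector_1}, so the whole computation can be carried out using \cref{prop::asymptotically_linear_part} without ever writing the canonical $v_j^{\psi}$ down explicitly.

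I would split the answer into the constant part $\mathbf{j}_c$ and the dynamical part. For the constant part, substitute the first bullet of \cref{prop::asymptotically_linear_part} into \eqref{eq::prop_tensor_product_2}: the only first-order contribution when a single matrix unit $E_{a,b}$ passes $1\otimes v_k$ is the commutator term $-\hbar\otimes\ad_{E_{a,b}}(v_k)$. Summing over all admissible $(a,b,k)$ with the signs inherited from \eqref{eq::whittaker_vector_geq2} and \eqref{eq::whittaker_vector_1}, and re-indexing, should give the claimed double sum, which by \cref{prop::wonderbolic_r_matrix} is exactly the inverse of $\omega$ viewed as a map $\b^*\to\m$; this invariant reformulation provides a reassuring consistency check.

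For the dynamical part, I would plug the second and third bullets of \cref{prop::asymptotically_linear_part} into the right action $(1\otimes v_k)\cdot x_j^l$. Commuting the $\l$-factors $E_{2,1}$ and $E_{1,1}$ past $1\otimes v_k$ only produces higher-order $\hbar$-corrections, so at first order only $\ad_{E_{1,r}}(v_k)$ survives modulo $\b$, while the remaining $E_{2,1}E_{1,1}^{j-i-r}$ acts on the right as the $\U(\l)$-valued coefficient $x_{21}x_{11}^{j-i-r}$; the same argument with the $E_{2,1}$ removed handles the contribution from $v_1^{\psi}$. Summing over admissible $i,j,r$ should reproduce the two triple/double sums in the statement.

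The main obstacle I expect is not any new identity but the combinatorial bookkeeping: tracking signs through the definitions of $\widetilde{v}_{N-j}^{\psi}$ and $T_{ij;1}^{(r)}$, converting between the ``$N-j$'' indexing of \cref{thm:whittaker_geq2}--\cref{thm:whittaker_geq3} and the ``$(i,j)$'' indexing of the target matrix, and ensuring that the boundary case $j=1$ coming from \cref{thm:whittaker_geq3} glues correctly with the generic case $j\geq 2$ coming from \cref{thm:whittaker_geq2}.
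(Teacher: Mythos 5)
Your proposal is correct and follows essentially the same route as the paper: restrict to first $\hbar$-order via \cref{prop::asymptotically_linear_contribution}, replace the canonical coefficients $x_j^l$ by the explicit $T$-generators using \cref{lm::constant_terms_t} and \cref{lm::asymptotically_linear_terms_same}, split into a constant part (first bullet of \cref{prop::asymptotically_linear_part}, yielding $\mathbf{j}_c$ after reindexing and matching against \cref{prop::wonderbolic_r_matrix}) and a dynamical part (second and third bullets, with the $\U(\l)$-factors surviving as right multipliers). The only thing the paper makes explicit that your sketch leaves to bookkeeping is the careful reindexing from the $N-j,N-k$ parametrization of \cref{thm:whittaker_geq2}--\cref{thm:whittaker_geq3} to the $(i,j)$ target indices, together with the resulting sign pattern $(-1)^{i-j-r}$, but you correctly flag this as the main obstacle.
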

		
		In fact, the constant part $\mathbf{j}_c$ is related to the form $\omega$ from \cref{prop::wonderbolic_r_matrix}.
		
		\begin{prop}
			The inverse $\omega^{-1}$ is equal to $\mathbf{j}_c-\mathbf{j}_c^{21}$.
		\end{prop}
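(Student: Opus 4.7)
The plan is to reduce the statement to a direct comparison of explicit formulas. By \cref{prop::wonderbolic_r_matrix}, the inverse of $\omega$ is already known to be $r_\w = (\text{swap of } \mathbf{j}_c^{21}) - \mathbf{j}_c^{21}$, where the swap is viewed as an element of $\b \otimes \m$. Consequently, the proposition reduces to checking that the $\mathbf{j}_c$ produced by the asymptotic calculation in \cref{prop::semiclassical_limit_vector} coincides with the swap of the $\mathbf{j}_c^{21}$ defined recursively in \cref{prop::wonderbolic_r_matrix}.

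The first step is to unroll the recursion \eqref{eq::subregular_r_matrix_conditions}. For $j \geq 3$, iterating along the pairs $(i,j), (i-1,j-1), (i-2,j-2), \dots$ telescopes until the second index reaches $2$, at which point the base case supplies the terminating summand $E_{2,\,i-j+1}$. After tracking the signs produced by the recursion, the result collapses to a uniform closed form
\[
\mathbf{j}_c^{21}(E_{i,j}^*) \;=\; \sum_{l=2}^{j} E_{l,\, l+i-j-1}
\]
for $j \geq 2,\ i \geq j+1$, together with $\mathbf{j}_c^{21}(E_{i,1}^*) = E_{1, i-1}$ for $i \geq 3$.

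The second step is essentially cosmetic: swapping the two tensor factors yields
\[
\sum_{i=3}^{N} E_{1, i-1} \otimes E_{i, 1} \;+\; \sum_{j=2}^{N-1} \sum_{i=j+1}^{N} \left( \sum_{l=2}^{j} E_{l,\, l+i-j-1} \right) \otimes E_{i, j},
\]
which is exactly the formula for $\mathbf{j}_c$ displayed in \cref{prop::semiclassical_limit_vector}. Combining this identification with \cref{prop::wonderbolic_r_matrix} completes the proof.

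The only genuine point of care is the sign bookkeeping in the unrolling. The two formulas for $\mathbf{j}_c$ are reached by very different routes---one by inverting the pairing $\omega \colon \b \to \m^*$, the other by extracting constant parts of the asymptotically linear pieces of Brundan--Kleshchev's generators $T_{22;1}^{(r)}$ and $T_{12;1}^{(r)}$ in \cref{prop::asymptotically_linear_part}---so matching them up functions as a nontrivial consistency check on both constructions. Once the signs are aligned, however, the telescoping collapses to the clean sum above and the remainder of the argument is immediate.
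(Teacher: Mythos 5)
Your proposal takes essentially the same route as the paper — one verifies that the explicit $\mathbf{j}_c$ from \cref{prop::semiclassical_limit_vector} is consistent with the characterization of $\mathbf{j}_c^{21}$ in \cref{prop::wonderbolic_r_matrix} — except that you unroll the recursion to a closed form and then compare, rather than plugging the closed form back into the recursion and checking it holds. That part of the strategy is sound, and your closed form
\[
\mathbf{j}_c^{21}(E_{i,j}^*) \;=\; \sum_{l=2}^{j} E_{l,\, l+i-j-1},\qquad
\mathbf{j}_c^{21}(E_{i,1}^*) = E_{1,i-1},
\]
together with the observation that swapping tensor factors reproduces the displayed $\mathbf{j}_c$, does establish the claim.

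The one place where your write-up is not actually correct as stated is the sign bookkeeping, which you flag as "the only genuine point of care" but do not carry out. Taken literally, \eqref{eq::subregular_r_matrix_conditions} reads $\mathbf{j}_c^{21}(E_{i,j}^*) = E_{j,i-1} - \mathbf{j}_c^{21}(E_{i-1,j-1}^*)$ for $j\geq 3$; unrolling that produces the alternating sum $E_{j,i-1} - E_{j-1,i-2} + E_{j-2,i-3} - \cdots$, which is \emph{not} the uniform sum you wrote. The uniform sum is nonetheless the right answer, because the minus sign in the printed recursion is a typo: from the proof of \cref{prop::wonderbolic_r_matrix}, the inverse condition $(-\mathbf{j}_c^{21})\circ\omega = \id_{\b}$ applied to $\omega(E_{j,i-1}) = -E_{i,j}^* + E_{i-1,j-1}^*$ gives $\mathbf{j}_c^{21}(E_{i,j}^*) - \mathbf{j}_c^{21}(E_{i-1,j-1}^*) = E_{j,i-1}$, i.e.\ a plus sign. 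With the corrected recursion the telescoping does collapse to your closed form and the comparison with $\mathbf{j}_c$ goes through. So you should either verify the closed form directly against the displayed formula for $\omega$ (bypassing the mis-printed recursion), or explicitly note and justify the sign correction before claiming the uniform telescoping.
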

		
		\begin{proof}
			One can easily see that the conditions \eqref{eq::subregular_r_matrix_conditions} are satisfied.
		\end{proof}
		
		Finally, by the Schur-Weyl duality, any representation of $\GL_N$ can be canonically obtained as a subrepresentation of $(\C^N)^{\otimes k} \otimes \det^l$ for some $k,l$, where $\det$ is the one-dimensional determinant representation. By naturality of construction, we obtain the main result of the paper.
		\begin{thm}
			\label{thm::semiclassical_limit}
			The semi-classical limit of the monoidal isomorphisms $J_{UV}$ from \cref{thm::ds_reduction_tensor_structure_trivialization} is given by the action of the universal element
			\[
			\mathbf{j} = \mathbf{j}_c + \sum\limits_{j=2}^{N-2} \sum\limits_{i=j+2}^{N} \sum\limits_{r=2}^{i-j} (-1)^{i-j-r} x_{21} x_{11}^{i-j-r} E_{1,r} \otimes E_{i,j} + \sum\limits_{i=4}^{N} \sum\limits_{r=2}^{i-2} (-1)^{i-r} x_{11}^{i-r-1} E_{1,r} \otimes E_{i,1},
			\]
			where 
			\[
			\mathbf{j}_c = \sum\limits_{j=2}^{N-1} \sum\limits_{i=j+1}^{N} \left( \sum\limits_{l=2}^{j} E_{l,l+i-j-1} \right) \otimes E_{i,j} + \sum\limits_{i=3}^{N} E_{1,i-1} \otimes E_{i,1}
			\]
			defines an inverse of $\omega$ from \cref{prop::wonderbolic_r_matrix}.
		\end{thm}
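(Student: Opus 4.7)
The plan is to reduce everything to the vector representation case already treated in \cref{prop::semiclassical_limit_vector}, and then propagate by naturality and monoidality. More precisely, the isomorphisms $J_{UV}$ form the tensor structure on a module functor in the sense of \cref{def::module_categories_functor}, so they are natural in both $U$ and $V$ and satisfy the compatibility pentagon relating $J_{U\otimes V, W}$, $J_{U, V\otimes W}$, and the associators. Taking the semi-classical limit is a termwise operation on these coherent collections of isomorphisms, so the leading $\hbar$-part $\mathbf{j}_{UV}$ inherits the same naturality and cocycle identities.

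First I would observe that, under the identification of $J_{UV}$ with a right action of an element of $\U(\g)\otimes\U(\g)\otimes\cW$ (or its appropriate completion) on $U\otimes V\otimes \cW$, the collection $\{\mathbf{j}_{UV}\}$ is determined by a single universal element $\mathbf{j}\in \U(\g)^{\otimes 2}\otimes \cW$: naturality across all finite-dimensional $\GL_N$-representations forces such a universal form, since any natural endomorphism of $U\otimes V$ indexed by $\GL_N$-representations arises from a single element acting on each tensor product. Thus it is enough to determine $\mathbf{j}$ by specializing the universal formula to one faithful family of representations.

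Next I would invoke Schur-Weyl duality: every irreducible polynomial representation of $\GL_N$ appears as a direct summand of some tensor power $(\C^N)^{\otimes k}$, and twists by powers of the determinant recover the remaining irreducibles. Since $J$ is a module functor structure, $J_{(\C^N)^{\otimes k}, (\C^N)^{\otimes l}}$ is determined by the iterated pentagon from the $J_{\C^N, \C^N}$ data, and hence so is its first-order part. In particular, the universal element $\mathbf{j}$ is uniquely determined by its action on $\C^N \otimes \C^N$, which is exactly the content of \cref{prop::semiclassical_limit_vector}. Matching both the $\mathbf{j}_c$ piece and the dynamical corrections involving $x_{11},x_{21}\in\U(\l)\subset\cW$ against the formula displayed in \cref{prop::semiclassical_limit_vector} then yields the claimed expression. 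Finally, the identification of $\mathbf{j}_c$ with an inverse of $\omega$ follows from the previous proposition, which checks directly that the stated $\mathbf{j}_c$ and $\mathbf{j}_c^{21}$ satisfy the defining conditions \eqref{eq::subregular_r_matrix_conditions} of \cref{prop::wonderbolic_r_matrix}.

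The part I would treat most carefully is the passage from ``the formula holds on $\C^N\otimes\C^N$'' to ``the universal element acts correctly on arbitrary $U\otimes V$''. Concretely, one must verify that the first-order isomorphisms $\mathbf{j}_{UV}$ do come from a genuine element in the twofold tensor product rather than an element that depends nontrivially on the representation, and that the compatibility between $J_{\C^N\otimes\C^N, \C^N}$ and iterated applications of $J_{\C^N,\C^N}$ is automatic. This is where the monoidality of $\res^{\psi}$ in \cref{thm::ds_reduction_monoidal} and the explicit tensor formula \eqref{eq::explicit_tensor_structure} are essential: they guarantee that $J$ is a genuine $\Rep(G)$-module structure, so the universal element exists and the vector-representation calculation determines it.
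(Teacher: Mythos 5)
Your proposal follows the same route as the paper: reduce to the explicit computation of \cref{prop::semiclassical_limit_vector} for $\C^N \otimes \C^N$, then invoke Schur--Weyl duality together with naturality and the monoidality of the Drinfeld--Sokolov reduction to extend to general $U, V$. The paper's own argument is a single sentence appealing to exactly these ingredients, so your proposal is a correct (and somewhat more detailed) elaboration of the same idea.
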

		
		\printbibliography
		
	\end{document}